\theoremstyle{theorem}
\newtheorem{theorem}{Theorem}[section]
\newtheorem{proposition}{Proposition}[section]
\newtheorem{lemma}{Lemma}[section]
\newtheorem{claim}{Claim}[section]
\newtheorem{corollary}{Corollary}[section]
\theoremstyle{definition}
\newtheorem{remark}{Remark}[section]
\newcommand{\Q}{\ensuremath{\mathbb{Q}}}
\newcommand{\R}{\ensuremath{\mathbb{R}}}
\newcommand{\Z}{\ensuremath{\mathbb{Z}}}
\newcommand{\N}{\ensuremath{\mathbb{N}}}
\newcommand{\M}{\mathrm{M}}
\newcommand{\GL}{\mathrm{GL}}
\newcommand{\id}{\mathrm{id}}
\newcommand{\rk}{\mathrm{rk}}
\newcommand{\com}{\mathrm{com}}
\newcommand{\transp}{\,{}^t\!}
\newcommand{\abs}[1]{\left\vert #1 \right\vert}
\newcommand{\norme}[1]{\left\Vert #1 \right\Vert}
\renewcommand{\span}{\mathrm{Span}}
\newcommand{\muexp}[4]{\mathring\mu_{#1}(#2\vert #3)_{#4}}
\newcommand{\muexpA}[4]{\mu_{#1}(#2\vert #3)_{#4}}
\renewcommand{\epsilon}{\varepsilon}
\renewcommand{\le}{\leqslant}
\renewcommand{\ge}{\geqslant}
\title{Upper bounds and spectrum for approximation exponents for subspaces of $\R^n$\hspace{3mm}}
\date{\today}
\author[E. Joseph]{Elio Joseph}
\address{Universit\'e Paris-Saclay, CNRS, Laboratoire de math\'ematiques d'Orsay, 91405, Orsay, France.}
\email{josephelio@gmail.com}
\begin{document}

	\begin{abstract}
		This paper uses W. M. Schmidt's idea formulated in 1967 to generalise the classical theory of Diophantine approximation to subspaces of $\R^n$. Given two subspaces of $\mathbb R^n$ $A$ and $B$ of respective dimensions $d$ and $e$ with $d+e\leqslant n$, the proximity between $A$ and $B$ is measured by $t=\min(d,e)$ canonical angles $0\leqslant \theta_1\leqslant \cdots\leqslant \theta_t\leqslant \pi/2$; we set $\psi_j(A,B)=\sin\theta_j$. If $B$ is a rational subspace, his complexity is measured by its height $H(B)=\mathrm{covol}(B\cap\mathbb Z^n)$. We denote by $\muexpA nAej$ the exponent of approximation defined as the upper bound (possibly equal to $+\infty$) of the set of $\beta>0$ such that for infinitely many rational subspaces $B$ of dimension $e$, the inequality $\psi_j(A,B)\leqslant H(B)^{-\beta}$ holds. We are interested in the minimal value $\muexp ndej$ taken by $\muexpA nAej$ when $A$ ranges through the set of subspaces of dimension $d$ of $\R^n$ such that for all rational subspaces $B$ of dimension $e$ one has $\dim (A\cap B)<j$. We show that if $A$ is included in a rational subspace $F$ of dimension $k$, its exponent in $\mathbb R^n$ is the same as its exponent in $\mathbb R^k$ via a rational isomorphism $F\to\mathbb R^k$. This allows us to deduce new upper bounds for $\muexp ndej$. We also study the values taken by $\muexpA nAee$ when $A$ is a subspace of $\mathbb R^n$ satisfying $\dim(A\cap B)<e$ for all rational subspaces $B$ of dimension $e$.
	\end{abstract}

\maketitle

\section{Introduction}

Diophantine approximation in its classical sense studies how well points of $\R^n$ can be approximated by rational points. We will focus here on a different but related problem, stated by W. M. Schmidt in 1967 (see \cite{schmidt67}), which studies the approximation of subspaces of $\R^n$ by rational subspaces. The results exposed here can be found with extended details in my Ph.D. thesis (see \cite{joseph21}, chapters 5 and 6).

Let us say that a subspace of $\R^n$ is \emph{rational} whenever it admits a basis of vectors of $\Q^n$; let us denote by $\mathfrak R_n(e)$ the set of rational subspaces of dimension $e$ of $\R^n$. A subspace $A$ of $\R^n$ is said to be \emph{$(e,j)$-irrational} whenever for all $B\in\mathfrak R_n(e)$, $\dim(A\cap B)<j$; let us denote by $\mathfrak I_n(d,e)_j$ the set of all $(e,j)$-irrational subspaces of dimension $d$ of $\R^n$. Notice that $A\in\mathfrak I_n(d,e)_1$ if, and only if, for any $B\in\mathfrak R_n(e)$: $A\cap B=\{0\}$.

In order to formulate the problems we will consider, we need a notion of \emph{complexity} for a rational subspace and a notion of \emph{proximity} between two subspaces of $\R^n$.

Let $B\in\mathfrak R_n(e)$ and $\Xi=(\xi_1,\ldots,\xi_N)\in\Z^N$, with $N=\binom ne$, be a vector in the class of Pl\"ucker coordinates of $B$. Let us define the \emph{height} of $B$ to be:
\[H(B)=\norme\Xi/\gcd(\xi_1,\ldots,\xi_N)\]
where $\norme\cdot$ stands for the Euclidean norm. In particular, when $\Xi$ has setwise coprime coordinates: $H(B)=\norme\Xi$. 

We will also make use of an equivalent definition of the height of a rational subspace. Given vectors $X_1,\ldots,X_e\in\R^n$, let us denote by $M\in\M_{n,e}(\R)$ the matrix whose $j$-th column is $X_j$ for $j\in\{1,\ldots,e\}$. The \emph{generalised determinant} of the family $(X_1,\ldots,X_e)$ is defined as $D(X_1,\ldots,X_e)=\sqrt{\det(\transp MM)}$. The following result establishes a link between the generalised determinant and the height of a rational subspace (see Theorem 1 of \cite{schmidt67}).
\begin{theorem}[Schmidt, 1967]\label{th_def_equiv_hauteur_vaoribibgipn}
Let $B\in\mathfrak R_n(e)$ and $(X_1,\ldots,X_e)$ be a basis of $B\cap\Z^n$. Then 
\[H(B)=D(X_1,\ldots,X_e).\]
\end{theorem}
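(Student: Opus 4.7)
The plan is to show two things: first, that the generalised determinant equals the Euclidean norm of the Plücker coordinate vector, and second, that when the basis is a $\Z$-basis of $B\cap\Z^n$, these Plücker coordinates are setwise coprime (so no gcd division is needed).

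For the first step, I would form the matrix $M\in\M_{n,e}(\R)$ whose columns are $X_1,\ldots,X_e$ and apply the Cauchy--Binet formula to $\transp MM$. This gives
\[ \det(\transp MM) = \sum_{I} (\det M_I)^2, \]
where the sum ranges over $e$-subsets $I\subseteq\{1,\ldots,n\}$ and $M_I$ is the corresponding $e\times e$ submatrix of $M$. Since the $\det M_I$ are precisely the components of the Plücker coordinate vector $\Xi$ associated with the basis $(X_1,\ldots,X_e)$, we obtain $D(X_1,\ldots,X_e) = \|\Xi\|$.

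The main step is then to prove that $\gcd(\xi_1,\ldots,\xi_N)=1$. Here I would argue by contradiction: suppose a prime $p$ divides every $\xi_I$. Then the reduction $\bar M$ of $M$ modulo $p$ has rank strictly less than $e$, so there exist integers $a_1,\ldots,a_e$, not all divisible by $p$, with
\[ a_1 X_1 + \cdots + a_e X_e \in p\Z^n. \]
This vector also lies in $B$, and one checks the elementary equality $p\Z^n \cap B = p(B\cap\Z^n)$ (the inclusion $\supseteq$ is trivial, and for $\subseteq$, if $v=pw$ with $w\in\Z^n$ and $v\in B$, then $w\in B$ as $B$ is a subspace). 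Hence there is $Z\in B\cap\Z^n$ with $a_1X_1+\cdots+a_eX_e=pZ$. Expanding $Z$ in the $\Z$-basis $(X_1,\ldots,X_e)$ as $Z=c_1X_1+\cdots+c_eX_e$ with $c_i\in\Z$ and comparing coefficients gives $a_i=pc_i$ for every $i$, contradicting the assumption that not all $a_i$ are divisible by $p$.

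Combining the two steps, $H(B) = \|\Xi\|/\gcd(\xi_1,\ldots,\xi_N) = \|\Xi\| = D(X_1,\ldots,X_e)$. The main obstacle is really the coprimality step; the Cauchy--Binet identification is standard, but the coprimality relies crucially on $(X_1,\ldots,X_e)$ being a $\Z$-basis of the full lattice $B\cap\Z^n$ (an arbitrary integer basis of $B$ would only give Plücker coordinates whose gcd encodes the index in the saturated lattice), and on the compatibility $p\Z^n\cap B=p(B\cap\Z^n)$, which is where the rationality of $B$ enters.
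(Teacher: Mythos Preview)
Your argument is correct. The paper itself does not prove this theorem: it is quoted from Schmidt (Theorem~1 of \cite{schmidt67}) without proof, so there is nothing in the present paper to compare your approach against. Your two-step strategy --- Cauchy--Binet to identify $D(X_1,\ldots,X_e)$ with $\norme\Xi$, then primitivity of the Pl\"ucker vector via reduction modulo a prime and use of the $\Z$-basis property --- is the standard route to this result. One very small expository remark: the identity $p\Z^n\cap B=p(B\cap\Z^n)$ that you single out actually holds for \emph{any} real subspace $B$; the rationality of $B$ enters earlier, in ensuring that $B\cap\Z^n$ is a lattice of full rank $e$ (so that a $\Z$-basis of it consists of $e$ vectors and the associated Pl\"ucker vector lies in $\Z^N$ with $N=\binom ne$).
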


For $X,Y\in\R^n\setminus\{0\}$, let us define a measure of the distance between these two vectors by $\psi(X,Y)=\sin\widehat{(X,Y)}=\norme{X\wedge Y}\cdot \norme{X}^{-1}\cdot \norme{Y}^{-1}$, where $\R^n$ and is endowed with the standard Euclidean norm $\norme\cdot$, $\wedge\colon\R^n\times\R^n\to\Lambda^2(\R^n)$ stands for the exterior product on $\R^n$, and the Euclidean norm is naturally extended to $\Lambda^2(\R^n)$ so that $\norme{X\wedge Y}$ is the area of the parallelogram spanned by $X$ and $Y$. Let us define by induction $t=\min(d,e)$ angles between two subspaces $A$ and $B$ of $\R^n$ of respective dimensions $d$ and $e$. The first one is defined as
\[\psi_1(A,B)=\min_{\substack{X\in A\setminus\{0\}\\Y\in B\setminus\{0\}}}\psi(X,Y)\]
and let $X_1$ and $Y_1$ be two unitary vectors such that $\psi_1(A,B)=\psi(X_1,Y_1)$. Let $j\in\{1,\ldots,t-1\}$ and assume that the first $j$ angles $\psi_1(A,B),\ldots,\psi_j(A,B)$ have been constructed together with couples of vectors $(X_1,Y_1),\ldots,(X_j,Y_j)\in A\times B$ such that $\psi_\ell(A_\ell,B_\ell)=\psi(X_\ell,Y_\ell)$ for $\ell\in\{1,\ldots,j\}$. Let $A_j$ and $B_j$ be two subspaces of $A$ and $B$ respectively, such that $A=\span(X_1,\ldots,X_j)\overset\perp\oplus A_j$ and 
$B=\span(Y_1,\ldots,Y_j)\overset\perp\oplus B_j$. The $(j+1)$-th angle is then defined as
\[\psi_{j+1}(A,B)=\min_{\substack{X\in A_j\setminus\{0\}\\Y\in B_j\setminus\{0\}}}\psi(X,Y),\]
and let $X_{j+1}$ and $Y_{j+1}$ be two unitary vectors such that $\psi_{j+1}(A,B)=\psi(X_{j+1},Y_{j+1})$.

According to Theorem 4 of \cite{schmidt67}, there exist orthonormal bases $(X_1,\ldots,X_d)$ and $(Y_1,\ldots,Y_e)$ of $A$ and $B$ respectively, such that for all $(i,j)\in\{1,\ldots,d\}\times\{1,\ldots,e\}$, $X_i\cdot Y_j=\delta_{i,j}\cos\theta_i$, where $\delta$ is the Kronecker delta, the $\theta_\ell$ are real numbers such that $0\le \theta_t\le \cdots\le \theta_1\le 1$, and $\cdot$ is the canonical scalar product on $\R^n$; notice that $\psi_j(A,B)=\sin\theta_j$. The angles defined between $A$ and $B$ are canonical since the numbers $\theta_1,\ldots,\theta_t$ does not depend on the choice of the bases $(X_1,\ldots,X_d)$ and $(Y_1,\ldots,Y_e)$ and are invariant under the application of an orthogonal transformation on $A$ and $B$ simultaneously. 

Let us now formulate the generalisation to the classical Diophantine approximation problem. Let $n\ge 2$, $d,e\in\{1,\ldots,n-1\}$ be such that $d+e\le n$, $j\in\{1,\ldots,\min(d,e)\}$. For $A\in\mathfrak I_n(d,e)_j$, let $\muexpA nAej$ be the upper bound in $[0,+\infty]$ of all $\beta>0$ such that
\[\psi_j(A,B)\le \frac 1{H(B)^\beta}\]
holds for infinitely many $B\in\mathfrak R_n(e)$. Let 
\[\muexp ndej=\inf_{A\in\mathfrak I_n(d,e)_j}\muexpA nAej.\]
The determination of $\muexp ndej$ in terms of $n$, $d$, $e$ and $j$ is still an open problem. Some partial results are known (see \cite{schmidt67}, Theorems 12, 13, 15, 16 and 17; \cite{moshchevitin20}, Satz 2; \cite{saxce20}, Theorem 9.3.2; \cite{joseph21bis}, Theorems 1.5, 1.6, 1.7, 1.8 and Corollary 1.1). In this paper, new upper bounds on $\muexp ndej$ will be proved in Propositions 3.1 and 3.2.

The other problem tacked by this paper is the determination of the set $\muexpA n{\mathfrak I_n(d,e)_j}ej$ in terms of $(n,d,e,j)$, \emph{i.e.} the set of values taken by $\muexpA nAej$ for $A\in\mathfrak I_n(d,e)_j$. \\

One of the main results of the present paper, which allows us to improve on several known upper bounds for $\muexp ndej$, is the following theorem and its corollary below.
\begin{theorem}\label{th_inclusion_sev_rationnel_apeivpinpiaenv}
Let $n\ge 2$ and $k\in\{2,\ldots,n\}$. Let $d,e\in\{1,\ldots,k-1\}$ be such that $d+e\le k$, and $j\in\{1,\ldots,\min(d,e)\}$. Let $A$ be a subspace of dimension $d$ of $\R^n$, such that there exists a subspace $F\in\mathfrak R_n(k)$ such that $A\subset F$. Let us denote by $\varphi\colon F\to\R^k$ a rational isomorphism and let $\tilde A=\varphi(A)$, which is a subspace of dimension $d$ of $\R^k$.
Let us assume that for any rational subspace $B'$ of dimension $e$ contained in $F$, one has 
\begin{equation}\label{condition_dirr_sur_A_ameirnmoafvonsd}
\dim (A\cap B')<j.
\end{equation}
Then $A\in\mathfrak I_n(d,e)_j$, $\tilde A\in\mathfrak I_k(d,e)_j$ and 
\[\muexpA nAej=\muexpA k{\tilde A}ej.\]
\end{theorem}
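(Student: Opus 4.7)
The plan is to establish the equality via a correspondence between rational subspaces of $F$ of dimension $e$ (which, through $\varphi$, correspond to rational subspaces of $\R^k$ of dimension $e$) and rational subspaces of $\R^n$ of dimension $e$ that effectively approximate $A$. I would first verify that $A \in \mathfrak I_n(d,e)_j$: given any $B \in \mathfrak R_n(e)$, the subspace $B \cap F$ is rational and can be completed to a rational $B_* \subset F$ of dimension $e$ using a fixed rational basis of $F$; since $A \subset F$, $A \cap B = A \cap (B \cap F) \subset A \cap B_*$, and hypothesis~(\ref{condition_dirr_sur_A_ameirnmoafvonsd}) gives $\dim(A \cap B) < j$. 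The property $\tilde A \in \mathfrak I_k(d,e)_j$ transfers through the bijection $B' \mapsto \varphi(B')$ between rational subspaces of dimension $e$ of $F$ and of $\R^k$, which preserves intersection dimensions with $A$ and $\tilde A$ respectively.

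For $\muexpA nAej \ge \muexpA k{\tilde A}ej$, I would pull back a sequence of good rational approximations $\tilde B_m$ of $\tilde A$ in $\R^k$ to $B_m = \varphi^{-1}(\tilde B_m) \subset F \subset \R^n$. Since $\varphi$ is a fixed linear isomorphism sending $F \cap \Z^n$ to a lattice commensurable with $\Z^k$, heights and canonical angles change by bounded multiplicative constants depending only on $\varphi$; these constants can be absorbed into an arbitrarily small loss $\delta > 0$ on the exponent for large heights.

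The converse $\muexpA nAej \le \muexpA k{\tilde A}ej$ is the main difficulty. Given infinitely many rational $B_m \in \mathfrak R_n(e)$ with $\psi_j(A, B_m) \le H(B_m)^{-\beta}$, I would consider the orthogonal projection $\pi(B_m)$ onto $F$, which is rational because the orthogonal projection onto a rational subspace preserves rationality. Three technical ingredients are needed: the monotonicity $\psi_j(A, \pi(B)) \le \psi_j(A, B)$, provable via the factorization $P_A\vert_B = P_A\vert_{\pi(B)} \circ \pi\vert_B$ together with $\|\pi\vert_B\| \le 1$, which yields the corresponding inequality on the singular values of the two projections onto $A$ (i.e.\ on the cosines of canonical angles); the dimensional bound $\dim \pi(B) \ge j$ whenever $\psi_j(A, B) < 1$, since $B \cap F^\perp \perp A$ forces $\dim(B \cap F^\perp)$ canonical angles equal to $\pi/2$, so $\dim(B \cap F^\perp) \le \min(d,e) - j \le e - j$; and the height comparison $H(\pi(B)) \le C H(B)$. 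I would then extend $\pi(B_m)$ to $B''_m \in \mathfrak R_n(e)$ contained in $F$ by completing with vectors from a fixed integral basis of $F \cap \Z^n$, keeping $H(B''_m) \le C' H(B_m)$ and, by monotonicity of canonical angles under inclusion, $\psi_j(A, B''_m) \le \psi_j(A, \pi(B_m))$.

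The delicate final point is guaranteeing infinitely many distinct $B''_m$, which I would handle by contradiction: if only finitely many arose, the pigeonhole principle would produce some $B''$ containing $\pi(B_{m_i})$ for infinitely many indices $i$; monotonicity would then give $\psi_j(A, B'') \le \psi_j(A, \pi(B_{m_i})) \le H(B_{m_i})^{-\beta} \to 0$, forcing $\psi_j(A, B'') = 0$, i.e.\ $\dim(A \cap B'') \ge j$, in contradiction with $A \in \mathfrak I_n(d,e)_j$. The images $\tilde B_m = \varphi(B''_m)$ then form infinitely many distinct rational subspaces of dimension $e$ in $\R^k$ with $\psi_j(\tilde A, \tilde B_m) \le H(\tilde B_m)^{-\beta+\delta'}$ for arbitrary $\delta' > 0$, yielding the inequality. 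The main obstacle I anticipate is the monotonicity lemma $\psi_j(A,\pi(B)) \le \psi_j(A,B)$ for higher $j$, where the singular-value argument is clean but requires being careful with the orderings of canonical angles.
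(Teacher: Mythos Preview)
Your proposal is correct and the overall architecture matches the paper's: same verification of $(e,j)$-irrationality, same pullback via $\varphi^{-1}$ for the easy inequality, and orthogonal projection onto $F$ for the hard one. The execution of the hard direction, however, is genuinely different. The paper first invests in a combinatorial lemma (Lemma~\ref{lemme_intersection_orthogonal_est_vide_aorfbaovbd}) constructing finitely many rational automorphisms $\rho_I$ fixing $F$ so that some $\rho_I(B_N)$ avoids $F^\perp$; this guarantees $\dim p_F^\perp(B_N)=e$ throughout and lets the paper apply its height lemma (Lemma~\ref{inegalitesurlahauteur_aroibvaoivbnoi}, which requires dimension preservation) and its projection--angle lemma (Lemma~\ref{inegalitesurladistance_ainreoaeonbe}, proved by a somewhat laborious trigonometric estimate yielding only $\psi_j(A,p_F^\perp(D))\le c'\,\psi_j(A,D)$ with a constant $c'>1$). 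You instead allow the dimension of $\pi(B_m)$ to drop, recover dimension $e$ by adjoining fixed integral vectors of $F$, and secure infinitely many distinct $B''_m$ by a short pigeonhole--contradiction using $\psi_j(A,B'')=0\Rightarrow\dim(A\cap B'')\ge j$. Your singular-value argument for $\psi_j(A,\pi(B))\le\psi_j(A,B)$ (via $P_A|_B=P_A|_{\pi(B)}\circ\pi|_B$ and $\sigma_j(ST)\le\sigma_j(S)\,\|T\|$) is sharper and shorter than the paper's Lemma~\ref{inegalitesurladistance_ainreoaeonbe}, giving constant $1$ with no hypothesis on a region $\mathcal R$. The trade-off is that you must extend the height bound $H(\pi(B))\le C\,H(B)$ to the case $\dim\pi(B)<e$, which is true (e.g.\ via $\mathrm{covol}(\pi(B\cap\Z^n))\le H(B)/H(B\cap F^\perp)\le H(B)$, since $\pi$ restricted to $(B\cap F^\perp)^{\perp_B}$ is a contraction) but not covered by Lemma~\ref{inegalitesurlahauteur_aroibvaoivbnoi} as stated; you should make this step explicit. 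Overall your route is more economical, while the paper's route keeps the dimension fixed at $e$ from the outset at the cost of the auxiliary construction in Lemma~\ref{lemme_intersection_orthogonal_est_vide_aorfbaovbd}.
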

One can notice that Hypothesis \eqref{condition_dirr_sur_A_ameirnmoafvonsd} of Theorem \ref{th_inclusion_sev_rationnel_apeivpinpiaenv} is \emph{a priori} a weak version of the hypothesis $A\in\mathfrak I_n(d,e)_j$ (\emph{i.e.} $\dim(A\cap B)<j$ for all $B\in\mathfrak R_n(e)$); Theorem \ref{th_inclusion_sev_rationnel_apeivpinpiaenv} shows that these two hypotheses are in fact equivalent.
\begin{corollary}\label{corollaire_th_inclusionsevrationnel_piarneeinvivan}
Let $n\ge 2$ and $k\in\{2,\ldots,n\}$. Let $d,e\in\{1,\ldots,k-1\}$ be such that $d+e\le k$, and $j\in\{1,\ldots,\min(d,e)\}$. Then one has
\[\muexp ndej\le \muexp kdej.\]
\end{corollary}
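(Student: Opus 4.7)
The plan is to show that every $(e,j)$-irrational subspace of $\R^k$ can be realised inside $\R^n$, as a subspace contained in some rational $k$-dimensional subspace, without changing its exponent. Concretely, fix a rational subspace $F\in\mathfrak R_n(k)$ (for instance $F=\R^k\times\{0\}^{n-k}$) together with a rational isomorphism $\varphi\colon F\to\R^k$ (for the above choice, the projection onto the first $k$ coordinates). Given an arbitrary $\tilde A\in\mathfrak I_k(d,e)_j$, I set $A=\varphi^{-1}(\tilde A)\subset F\subset\R^n$, which is a subspace of dimension $d$.

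The next step is to verify the hypothesis \eqref{condition_dirr_sur_A_ameirnmoafvonsd} of Theorem \ref{th_inclusion_sev_rationnel_apeivpinpiaenv}. Let $B'$ be any rational subspace of dimension $e$ contained in $F$. Because $\varphi$ is a rational isomorphism, $\varphi(B')\in\mathfrak R_k(e)$, and
\[
A\cap B'=\varphi^{-1}(\tilde A)\cap \varphi^{-1}(\varphi(B'))=\varphi^{-1}\bigl(\tilde A\cap\varphi(B')\bigr),
\]
so $\dim(A\cap B')=\dim(\tilde A\cap\varphi(B'))<j$, by $(e,j)$-irrationality of $\tilde A$ in $\R^k$. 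Theorem \ref{th_inclusion_sev_rationnel_apeivpinpiaenv} then tells us simultaneously that $A\in\mathfrak I_n(d,e)_j$ and that $\muexpA nAej=\muexpA k{\tilde A}ej$.

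From the definition of $\muexp ndej$ as an infimum, this implies $\muexp ndej\le \muexpA nAej=\muexpA k{\tilde A}ej$; taking the infimum over $\tilde A\in\mathfrak I_k(d,e)_j$ yields the desired inequality. No genuine difficulty is expected: the only thing to check is that $\varphi^{-1}$ sends rational subspaces of $\R^k$ bijectively onto rational subspaces of $F$, which is immediate from the definition of a rational isomorphism and was already implicit in the statement of Theorem \ref{th_inclusion_sev_rationnel_apeivpinpiaenv}; the remark following that theorem, identifying the \emph{a priori} weaker hypothesis \eqref{condition_dirr_sur_A_ameirnmoafvonsd} with full $(e,j)$-irrationality in $\R^n$, is what allows the infimum on the $\R^n$ side to see the lifted subspace $A$.
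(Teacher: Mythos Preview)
Your proof is correct and is precisely the argument the paper has in mind: the corollary is stated as an immediate consequence of Theorem~\ref{th_inclusion_sev_rationnel_apeivpinpiaenv} and no separate proof is given, the intended reasoning being exactly to lift an arbitrary $\tilde A\in\mathfrak I_k(d,e)_j$ into a rational $k$-dimensional subspace of $\R^n$ via $\varphi^{-1}$, check hypothesis~\eqref{condition_dirr_sur_A_ameirnmoafvonsd}, and invoke the theorem to transport the exponent.
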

This corollary leads to new upper bounds in subsection \ref{subsection_improvements_aienoeivnovin}; for instance Proposition \ref{applicitation_deduction_Mosh_inclusion_sev_rationnel_naveoimfvn} gives if $n\ge 6$, $d\in\{3,\ldots,\lfloor n/2\rfloor\}$ and $\ell\in\{1,\ldots,d\}$: $\muexp nd{\ell}1\le 2d^2/(2d-\ell)$, improving on several known upper bounds. \\

The other main result of this paper deals with the spectrum of $\muexpA n\bullet ej$ when $d=e=j$.
\begin{theorem}\label{theoreme_spectre_amoeribnefaomnv}
Let $n\ge 2$ and $\ell\in\{1,\ldots,\lfloor n/2\rfloor\}$, one has
\[\left[1+\frac 1{2\ell}+\sqrt{1+\frac{1}{4\ell^2}},+\infty\right]\subset\Big\{\muexpA nA\ell\ell,\ A\in\mathfrak I_n(\ell,\ell)_\ell\Big\}.\]
\end{theorem}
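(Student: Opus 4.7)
The plan is to construct, for each $\beta \in [c_\ell, +\infty]$ with $c_\ell = 1 + \frac{1}{2\ell} + \sqrt{1 + \frac{1}{4\ell^2}}$, an explicit subspace $A$ of $\R^{2\ell}$ in $\mathfrak I_{2\ell}(\ell,\ell)_\ell$ with $\muexpA{2\ell}A\ell\ell = \beta$. Since $n \geq 2\ell$ by the assumption $\ell \leq \lfloor n/2 \rfloor$, Theorem~\ref{th_inclusion_sev_rationnel_apeivpinpiaenv} applied to the rational subspace $F = \R^{2\ell} \times \{0\}^{n-2\ell}$ of $\R^n$ then lifts $A$ to a subspace of $\R^n$ in $\mathfrak I_n(\ell,\ell)_\ell$ with the same exponent, yielding the theorem.

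Fix $\beta < +\infty$ (the case $\beta = +\infty$ is handled by a Liouville-type variant). Choose $\gamma = \gamma(\beta, \ell) > 1$ suitably and build inductively a sequence $(B_k)_{k \geq 0}$ in $\mathfrak R_{2\ell}(\ell)$ such that, writing $H_k := H(B_k)$, one has $H_{k+1} \asymp H_k^\gamma$, $\psi_\ell(B_k, B_{k+1}) \asymp H_k^{-\beta}$, and $\dim(B_k \cap B_{k+1}) < \ell$. The existence of each $B_{k+1}$ follows from a Dirichlet-type pigeonhole argument in the Grassmannian $\mathrm{Gr}(\ell, 2\ell)$. The sequence $\sum_k \psi_\ell(B_k, B_{k+1})$ being convergent, $(B_k)$ is Cauchy in the Grassmannian and converges to a limit $A$ of dimension $\ell$ satisfying $\psi_\ell(A, B_k) \lesssim H_k^{-\beta}$, which gives the lower bound $\muexpA{2\ell}A\ell\ell \geq \beta$.

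The key auxiliary tool is a ``no two close rationals'' lemma: for distinct $B, B' \in \mathfrak R_{2\ell}(\ell)$ with $\dim(B \cap B') < \ell$, one has $\psi_\ell(B, B') \geq c\,(H(B) H(B'))^{-1/\ell}$, coming from the integrality of Pl\"ucker coordinates combined with $\psi_\ell \geq \bigl(\prod_j \psi_j\bigr)^{1/\ell}$. Applied to $A$ versus any putative rational $B^* = A$, this immediately contradicts $\psi_\ell(A, B_k) \to 0$ with $B_k \neq B^*$, so $A$ is non-rational and thus belongs to $\mathfrak I_{2\ell}(\ell,\ell)_\ell$. Applied to any spurious $B \notin \{B_k\}$ with $H_k \leq H(B) < H_{k+1}$, combined with the triangle inequality against $B_k$ and $B_{k+1}$, it forces the exponent $\beta_B$ at $B$ to satisfy $\beta_B \leq \beta$, giving the matching upper bound.

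The main obstacle lies in the joint feasibility of the constraints on $\gamma$: Dirichlet realizability of $B_{k+1}$ at each step, the lower bound from the no-close lemma, and the upper-bound analysis above impose competing inequalities on $\gamma$, simultaneously soluble exactly when $\beta$ is at least the larger root of $\ell X^2 - (2\ell+1)X + 1 = 0$, namely $c_\ell$. This is the origin of the threshold in the theorem statement; below $c_\ell$ the construction as given breaks down and a refined approach would be required.
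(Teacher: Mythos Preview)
Your approach differs substantially from the paper's. The paper constructs $A$ explicitly in $\R^{2\ell}$ as the column span of $\begin{pmatrix} I_\ell \\ M_\xi\end{pmatrix}$, where $M_\xi=(\xi_{i,j})$ is an $\ell\times\ell$ matrix of Liouville-type reals $\xi_{i,j}=\sum_k e_k^{(i,j)}\theta^{-\lfloor\alpha^k\rfloor}$ with $\alpha=\ell\beta$ and $\theta$ a fixed prime. It proves $A\in\mathfrak I_{2\ell}(\ell,\ell)_1$ by choosing the digits $e_k^{(i,j)}$ so that the $\xi_{i,j}$ are algebraically independent over $\Q$ (via a cardinality argument and a block-determinant computation), exhibits the truncation subspaces $B_N$ as approximants giving $\muexpA{2\ell}A\ell\ell\ge\beta$, and for the upper bound shows directly, by bounding the integer $\norme{X_N^{(i)}\wedge v_1\wedge\cdots\wedge v_\ell}$ below $1$, that any $C$ with $\psi_\ell(A,C)\le H(C)^{-\beta-\epsilon}$ must equal some $B_N$ once $H(C)$ is large. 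The inequality $\alpha^2-(2\ell+1)\alpha+\ell\ge 0$ (equivalently $\ell\beta^2-(2\ell+1)\beta+1\ge0$, whose larger root is $c_\ell$) enters precisely in this last step, when choosing $N$ so that both error terms vanish. The extensions to $\beta=+\infty$ and to $n>2\ell$ via Theorem~\ref{th_inclusion_sev_rationnel_apeivpinpiaenv} are as you describe.

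Your sketch has real gaps. First, the derivation of your gap lemma via $\psi_\ell\ge(\prod_j\psi_j)^{1/\ell}$ plus integrality requires $B\cap B'=\{0\}$, not merely $\dim(B\cap B')<\ell$: for $\Z$-bases $(X_i)$, $(Y_j)$ one has $\prod_j\psi_j(B,B')=\lvert\det(X_1,\ldots,X_\ell,Y_1,\ldots,Y_\ell)\rvert/(H(B)H(B'))$, and this determinant vanishes as soon as $B\cap B'\ne\{0\}$, so the inequality yields nothing in that case. Second, the ``Dirichlet-type pigeonhole'' step is not a pigeonhole at all: you need $B_{k+1}\ne B_k$ with \emph{two-sided} control $\psi_\ell(B_k,B_{k+1})\asymp H_k^{-\beta}$ and $H(B_{k+1})\asymp H_k^\gamma$, and since $B_k$ is itself rational its Diophantine approximation is degenerate; producing such a $B_{k+1}$ requires an explicit construction, not counting. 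Third, and most importantly, you assert that the feasibility constraints on $\gamma$ are simultaneously soluble exactly for $\beta\ge c_\ell$, but you give no computation; if one actually traces the inequalities coming from your $1/\ell$ gap exponent together with the triangle-inequality argument against $B_k$ and $B_{k+1}$, the quadratic $\ell X^2-(2\ell+1)X+1$ does not obviously appear, so this crucial claim is unsupported.
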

In Section \ref{section_lemmes_surhauteuretproximite_eiofnbeev} we state some lemmas on the height and the proximity, which will find use in the other sections. In Section \ref{section_inclusion_sev_raboeinboaisnv}, we will prove and use Theorem \ref{th_inclusion_sev_rationnel_apeivpinpiaenv} to deduce new upper bounds on $\muexp ndej$. Section \ref{section_spectre_oairenboifnv} is dedicated to prove Theorem \ref{theoreme_spectre_amoeribnefaomnv}, which brings a partial answer to the problem of the determination of the set $\muexpA n{\mathfrak I_n(d,e)_j}ej$; the main theorem of Section \ref{section_inclusion_sev_raboeinboaisnv} is also used in this section.

\section{Some results about the height and the proximity}\label{section_lemmes_surhauteuretproximite_eiofnbeev}

The first lemma is proved in \cite{schmidt67} (Lemma 13).
\begin{lemma}[Schmidt, 1967]\label{lemma_13_schmidt_beoifnboinfsoin}
Let $A$ and $B$ be two subspaces of $\R^n$ of dimensions $d$ and $e$ respectively, let $\varphi$ be a non-singular linear transformation of $\R^n$. There exists a constant $c(\varphi)>0$ such that for all $j\in\{1,\ldots,\min(d,e)\}$, $\psi_j(\varphi(A),\varphi(B))\le c(\varphi)\psi_j(A,B)$.
\end{lemma}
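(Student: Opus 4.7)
The plan is to reduce the inequality to a minimax characterisation of the canonical angles and to a pointwise distortion estimate. First, using the principal bases $(X_1,\ldots,X_d)$ of $A$ and $(Y_1,\ldots,Y_e)$ of $B$ from Theorem 4 of \cite{schmidt67}, the vectors $P_{B^\perp}X_i=X_i-\cos\theta_i Y_i$ are mutually orthogonal of norm $\sin\theta_i$, so the operator $P_{B^\perp}|_A$ has smallest singular values $\sin\theta_1\le\cdots\le\sin\theta_t$. The Courant--Fischer minimax principle then yields
\[
\psi_j(A,B)=\min_{\substack{V\subset A\\ \dim V=j}}\max_{\substack{X\in V\\ X\ne 0}}\frac{\norme{P_{B^\perp}X}}{\norme{X}},
\]
with the minimum attained at $V=\mathrm{Span}(X_1,\ldots,X_j)$.

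Second, I would establish the pointwise distortion estimate
\[
\frac{\norme{P_{\varphi(B)^\perp}\varphi(X)}}{\norme{\varphi(X)}}\le \norme{\varphi}\cdot\norme{\varphi^{-1}}\cdot\frac{\norme{P_{B^\perp}X}}{\norme{X}}
\]
for every $X\in\R^n\setminus\{0\}$, where $\norme{\cdot}$ also denotes the operator norm on linear maps. Indeed, picking $Y\in B$ with $\norme{X-Y}=\norme{P_{B^\perp}X}$ gives $\varphi(Y)\in\varphi(B)$ and $\norme{P_{\varphi(B)^\perp}\varphi(X)}\le\norme{\varphi(X)-\varphi(Y)}\le\norme{\varphi}\cdot\norme{P_{B^\perp}X}$, while $\norme{\varphi(X)}\ge \norme{X}/\norme{\varphi^{-1}}$.

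The two ingredients combine immediately: if $V_0\subset A$ attains the minimum in the minimax for $(A,B)$, then $\varphi(V_0)$ is a $j$-dimensional subspace of $\varphi(A)$, and applying the minimax characterisation to $(\varphi(A),\varphi(B))$ together with the distortion estimate gives
\[
\psi_j(\varphi(A),\varphi(B))\le\max_{X\in V_0\setminus\{0\}}\frac{\norme{P_{\varphi(B)^\perp}\varphi(X)}}{\norme{\varphi(X)}}\le\norme{\varphi}\cdot\norme{\varphi^{-1}}\cdot\psi_j(A,B),
\]
so one may take $c(\varphi)=\norme{\varphi}\cdot\norme{\varphi^{-1}}$. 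The main subtlety is structural rather than computational: a direct attack from the inductive definition of the $\psi_j$ would require controlling how $\varphi$ acts on the orthogonal decompositions $A=\mathrm{Span}(X_1,\ldots,X_j)\overset\perp\oplus A_j$ built at each stage, which is awkward because $\varphi$ need not preserve orthogonality. Routing through the minimax characterisation avoids this entirely, since the latter only involves $j$-dimensional subspaces of $A$ and no orthogonality condition inside $A$, and yields a constant depending only on the condition number of $\varphi$.
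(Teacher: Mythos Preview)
The paper does not give a proof of this lemma; it is quoted from Schmidt's 1967 paper (Lemma~13 there) without argument. Your proof is correct. The minimax identity you derive is in fact a reformulation of Lemma~\ref{lemma_12_Schmidt_vaeorinbofiso} (Schmidt's Lemma~12, stated just below in the present paper): for any nonzero $X$ one has $\min_{Y\in B\setminus\{0\}}\psi(X,Y)=\norme{P_{B^\perp}X}/\norme X$, so ``there exists a $j$-dimensional $A_j\subset A$ such that every $X\in A_j$ admits $Y\in B$ with $\psi(X,Y)\le\lambda$'' is exactly ``$\min_{\dim V=j}\max_{X\in V}\norme{P_{B^\perp}X}/\norme X\le\lambda$''. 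Schmidt's original argument runs through his Lemma~12 together with the pointwise bound $\psi(\varphi X,\varphi Y)\le c(\varphi)\,\psi(X,Y)$, which is the exact analogue of your distortion estimate; so your route is essentially Schmidt's, recast in the language of singular values and Courant--Fischer. A pleasant by-product of your formulation is the explicit constant $c(\varphi)=\norme\varphi\cdot\norme{\varphi^{-1}}$, the condition number of $\varphi$.
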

We will make use of the brief Lemma \ref{lemme_transfert_psi1_psiell_baoeoearv} below, but first, we require a lemma of Schmidt (see \cite{schmidt67}, Lemma 12), which will also find use in the proofs of Lemma \ref{inegalitesurladistance_ainreoaeonbe} and Theorem \ref{th_inclusion_sev_rationnel_apeivpinpiaenv}.
\begin{lemma}\label{lemma_12_Schmidt_vaeorinbofiso}
Let $A$ and $B$ be two subspaces of $\R^n$ of dimensions $d$ and $e$ respectively, let $j\in\{1,\ldots,\min(d,e)\}$. Then $\psi_j(A,B)$ is the smallest number $\lambda$ so there is a subspace $A_j\subset A$ so that for every $X\in A_j\setminus\{0\}$, there exists $Y\in B\setminus\{0\}$ such that $\psi(X,Y)\le \lambda$.
\end{lemma}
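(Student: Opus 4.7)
The plan is to use the canonical orthonormal bases $(X_1,\ldots,X_d)$ of $A$ and $(Y_1,\ldots,Y_e)$ of $B$ given by Schmidt's Theorem 4 (recalled in the introduction), which satisfy $X_i\cdot Y_k=\delta_{ik}\cos\theta_i$ with $\theta_1\le\cdots\le\theta_t$ and $\psi_j(A,B)=\sin\theta_j$. The statement should be read with the implicit constraint that $\dim A_j=j$, since otherwise $A_j=\{0\}$ would vacuously satisfy the condition with $\lambda=0$; this matches Schmidt's original formulation of Lemma 12.

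For the upper bound, I would take $A_j=\span(X_1,\ldots,X_j)$. For any $X=\sum_{i=1}^j a_iX_i\in A_j\setminus\{0\}$, the vector $Y=\sum_{i=1}^j a_iY_i$ is a nonzero element of $B$, and a direct computation yields $\|X\|^2=\|Y\|^2=\sum_{i=1}^j a_i^2$ together with $X\cdot Y=\sum_{i=1}^j a_i^2\cos\theta_i\ge(\cos\theta_j)\|X\|^2$, hence $\psi(X,Y)\le\sin\theta_j=\psi_j(A,B)$. For the converse, suppose some $j$-dimensional $A_j\subset A$ witnesses the property at level $\lambda$. Let $A'=\span(X_j,X_{j+1},\ldots,X_d)$, of dimension $d-j+1$. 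Since $j+(d-j+1)>d$, one can pick a nonzero $X=\sum_{i=j}^d a_iX_i$ in $A_j\cap A'$. Writing $\pi_B$ for the orthogonal projection onto $B$, one has $\pi_B(X)=\sum_{i=j}^{\min(d,e)}a_i\cos\theta_i\,Y_i$, whence $\|\pi_B(X)\|^2=\sum_{i=j}^{\min(d,e)}a_i^2\cos^2\theta_i$. Using $\theta_i\ge\theta_j$ for $i\ge j$ yields $\sin^2\widehat{(X,\pi_B(X))}=1-\|\pi_B(X)\|^2/\|X\|^2\ge\sin^2\theta_j$; since $\pi_B(X)$ realizes $\min_{Y\in B\setminus\{0\}}\psi(X,Y)$ when it is nonzero, the hypothesis forces $\lambda\ge\sin\theta_j=\psi_j(A,B)$.

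The main obstacle is this reverse step, which combines a dimension-counting argument (to produce a vector of $A_j$ lying in the ``bad'' directions $X_j,\ldots,X_d$) with a projection computation (to uniformly control $\psi(X,Y)$ from below in $Y$). A minor subtlety is the degenerate case $\pi_B(X)=0$, possible when $d>e$ and $X$ has a nontrivial component along some $X_i$ with $i>e$; there $\psi(X,Y)=1$ for every $Y\in B\setminus\{0\}$, so one trivially has $\lambda\ge 1\ge\sin\theta_j$ and the conclusion still holds.
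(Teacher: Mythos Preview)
The paper does not give its own proof of this lemma; it simply quotes it as Lemma~12 of Schmidt~\cite{schmidt67}. Your argument is correct and is essentially the standard proof via the canonical bases of Schmidt's Theorem~4: exhibit $A_j=\span(X_1,\ldots,X_j)$ for the upper bound, and for the lower bound force a nonzero vector of any candidate $A_j$ into $\span(X_j,\ldots,X_d)$ by dimension counting, then control $\min_{Y\in B}\psi(X,Y)$ via the orthogonal projection onto $B$. Your handling of the degenerate case $\pi_B(X)=0$ (possible when $d>e$) is also correct. One cosmetic point: the ordering $\theta_1\le\cdots\le\theta_t$ you use is the one from the abstract (and the one consistent with $\psi_1\le\cdots\le\psi_t$); the opposite inequality printed in the introduction is a typo, so your convention is the right one.
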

\begin{lemma}\label{lemme_transfert_psi1_psiell_baoeoearv}
Let $A$ and $B$ be two non-trivial subspaces of $\R^n$ such that $\dim A\le \dim B$. Then
\[\forall X\in A\setminus\{0\},\quad \psi_1(\span(X),B)\le\psi_{\dim A}(A,B).\]
\end{lemma}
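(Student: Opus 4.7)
The plan is to expand $X$ in the canonical orthonormal basis of $A$ provided by Schmidt's Theorem 4, already recalled in the preamble, and compute $\psi_1(\span(X),B)$ directly. Set $d=\dim A$ and $e=\dim B$, so by hypothesis $t=\min(d,e)=d$.

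The first step is to identify $\psi_1(\span(X),B)$ with a projection distance. Since $\psi_1(\span(X),B)=\min_{Y\in B\setminus\{0\}}\psi(X,Y)$ and $\psi(X,Y)=\sin\widehat{(X,Y)}$ is minimised over $Y\in B\setminus\{0\}$ at $Y=P_BX$, where $P_B$ denotes the orthogonal projection onto $B$, one has
\[\psi_1(\span(X),B)=\frac{\norme{X-P_BX}}{\norme X},\]
the degenerate case $P_BX=0$ giving $1$ on both sides.

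The second step is the key computation. Schmidt's Theorem 4 provides orthonormal bases $(X_1,\ldots,X_d)$ of $A$ and $(Y_1,\ldots,Y_e)$ of $B$ with $X_i\cdot Y_j=\delta_{i,j}\cos\theta_i$ for $(i,j)\in\{1,\ldots,d\}\times\{1,\ldots,e\}$, ordered so that $\psi_k(A,B)=\sin\theta_k$ and $\theta_1\le\cdots\le\theta_d$. Writing $X=\sum_{i=1}^{d}a_iX_i$, the scalar products $X\cdot Y_j$ vanish for $j>d$ and equal $a_j\cos\theta_j$ for $j\le d$, hence $P_BX=\sum_{j=1}^{d}a_j\cos\theta_j\,Y_j$ and, by orthogonality of the $Y_j$,
\[\norme{X-P_BX}^2=\norme X^2-\norme{P_BX}^2=\sum_{i=1}^{d}a_i^2-\sum_{i=1}^{d}a_i^2\cos^2\theta_i=\sum_{i=1}^{d}a_i^2\sin^2\theta_i.\]

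The last step is to bound this weighted average of the $\sin^2\theta_i$ by its largest term:
\[\psi_1(\span(X),B)^2=\frac{\sum_{i=1}^{d}a_i^2\sin^2\theta_i}{\sum_{i=1}^{d}a_i^2}\le\sin^2\theta_d=\psi_d(A,B)^2,\]
since $\theta_d$ is the largest canonical angle; taking square roots yields the claim. No serious obstacle is anticipated: the argument is a routine computation in the canonical angle decomposition, and the only subtle point is the identification of $\psi_1(\span(X),B)$ with the (normalised) distance from $X$ to $B$, which is a standard consequence of the minimising property of the orthogonal projection.
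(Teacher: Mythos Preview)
Your proof is correct. It takes a genuinely different route from the paper, though. The paper's argument is a three-line min--max: by definition $\psi_1(\span(X),B)=\min_{Y\in B\setminus\{0\}}\psi(X,Y)$, this is bounded above by $\max_{Z\in A\setminus\{0\}}\min_{Y\in B\setminus\{0\}}\psi(Z,Y)$, and Schmidt's Lemma (Lemma~\ref{lemma_12_Schmidt_vaeorinbofiso}) identifies the latter with $\psi_{\dim A}(A,B)$ because the only $(\dim A)$-dimensional subspace of $A$ is $A$ itself. No computation is needed. Your argument instead expands $X$ in the canonical orthonormal basis supplied by Schmidt's Theorem~4 and computes the projection distance explicitly, obtaining the exact formula $\psi_1(\span(X),B)^2=\sum_i a_i^2\sin^2\theta_i\big/\sum_i a_i^2$ before bounding by the largest term. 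The paper's approach is shorter and avoids any coordinate work; your approach is more concrete and actually yields the precise dependence of $\psi_1(\span(X),B)$ on the position of $X$ inside $A$, which is strictly more information than the lemma asks for. One minor remark: the paper's preamble orders the $\theta_i$ the opposite way ($\theta_t\le\cdots\le\theta_1$), so your re-indexing with $\theta_1\le\cdots\le\theta_d$ departs from the stated convention; this is harmless but worth flagging if you keep this write-up.
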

\begin{proof}
Let $X\in A\setminus\{0\}$. One has
\begin{equation*}
\begin{split}
\psi_1(\span(X),B)
	&=\min_{Y\in B\setminus\{0\}}\psi(X,Y) \\
	&\le \max_{Z\in A\setminus\{0\}}\min_{Y\in B\setminus\{0\}}\psi(Z,Y) \\
	&=\min\{\varphi,\ \forall Z\in A\setminus\{0\},\quad \exists Y\in B\setminus\{0\},\quad \psi(Z,Y)\le \varphi\} \\
	&=\psi_{\dim A}(A,B)
\end{split}
\end{equation*}
using Lemma \ref{lemma_12_Schmidt_vaeorinbofiso}.
\end{proof}
Now, we prove a result on the behaviour of the height of a rational subspace when applying a rational morphism.
\begin{lemma}\label{inegalitesurlahauteur_aroibvaoivbnoi}
Let $n\ge 3$ and $e,p\in\{1,\ldots,n\}$; let $B\in\mathfrak R_e(n)$ and $F$ be two rational subspaces of $\R^n$ such that $B\subset F$; let $\varphi\colon F\to \R^p$ be a rational morphism such that $\dim \varphi (B)=\dim B$. There exists a constant $c(\varphi)>0$, depending only on $\varphi$, such that
\[H(\varphi(B))\le c(\varphi)H(B).\]
\end{lemma}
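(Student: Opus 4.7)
The plan is to combine the equivalent description $H(B) = D(X_1, \ldots, X_e)$ from Theorem \ref{th_def_equiv_hauteur_vaoribibgipn}, a denominator-clearing argument producing an integer sublattice in $\varphi(B)$, and a short linear-algebra estimate saying that the generalised determinant is contracted by a factor at most $L^e$ when one applies a linear map of operator norm $L$.

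First I would pick a basis $(X_1, \ldots, X_e)$ of the Euclidean lattice $B \cap \Z^n$, so that $H(B) = D(X_1, \ldots, X_e)$ by Theorem \ref{th_def_equiv_hauteur_vaoribibgipn}. Since $\varphi$ is rational, there exists a positive integer $q$, depending only on $\varphi$ (take a common denominator of the entries of a matrix representative of $\varphi$), such that $q\varphi(F \cap \Z^n) \subset \Z^p$. In particular the vectors $q\varphi(X_1), \ldots, q\varphi(X_e)$ all lie in $\varphi(B) \cap \Z^p$. The hypothesis $\dim \varphi(B) = e$ ensures that $\varphi|_B$ is injective, so these $e$ vectors are linearly independent and generate a full-rank sublattice of $\varphi(B) \cap \Z^p$. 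Applying Theorem \ref{th_def_equiv_hauteur_vaoribibgipn} to $\varphi(B)$ and using the fact that the covolume of a sublattice bounds from above the covolume of the ambient lattice yields
\[H(\varphi(B)) \le D(q\varphi(X_1), \ldots, q\varphi(X_e)) = q^e D(\varphi(X_1), \ldots, \varphi(X_e)).\]

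It remains to bound $D(\varphi(X_1), \ldots, \varphi(X_e))$ by a constant times $D(X_1, \ldots, X_e) = H(B)$. Let $L$ be the operator norm of $\varphi\colon F \to \R^p$ with respect to the Euclidean norms. Writing $M$ for the matrix whose columns are $X_1, \ldots, X_e$ expressed in an orthonormal basis, and $\Phi$ for the matrix of $\varphi$ in orthonormal bases, one has $\transp \Phi \Phi \preceq L^2 I$ in the Loewner order, hence $\transp M \transp \Phi \Phi M \preceq L^2 \transp M M$, and monotonicity of the determinant on positive semi-definite matrices gives $D(\varphi(X_1), \ldots, \varphi(X_e)) \le L^e D(X_1, \ldots, X_e)$. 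Combining the two inequalities yields the desired estimate with $c(\varphi) = (qL)^e$, a constant depending only on $\varphi$.

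There is no real obstacle: both the sublattice inequality (covolume only increases for sublattices) and the volume-contraction estimate (generalised determinants are contracted by at most $L^e$) are standard. The one place that deserves attention is the use of the hypothesis $\dim \varphi(B) = \dim B$: without it, the images $q\varphi(X_i)$ might fail to be linearly independent, in which case they would not generate a full-rank sublattice of $\varphi(B) \cap \Z^p$ and the first step of the argument would break down.
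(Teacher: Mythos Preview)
Your proof is correct and takes a genuinely different route from the paper's. The paper reduces to the case $e=1$ via Pl\"ucker coordinates: it observes that $H(B)=H(B^*)$ where $B^*$ is the rational line in $\R^N$ (with $N=\binom{n}{e}$) spanned by the Pl\"ucker coordinates of $B$, and that $\varphi(B)^*=\varphi^{(e)}(B^*)$ where $\varphi^{(e)}$ is the $e$-th compound of $\varphi$ acting on $\R^N$; it then settles the line case by a direct norm bound together with a fractional-ideal argument to control the denominator of the image vector. Your argument instead stays with the covolume interpretation of Theorem~\ref{th_def_equiv_hauteur_vaoribibgipn}: you produce an integral sublattice of $\varphi(B)\cap\Z^p$ by denominator-clearing, and then use the Loewner-order estimate $\transp\Phi\Phi\preceq L^2 I$ to bound the generalised determinant directly. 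This avoids the compound-matrix machinery entirely and is arguably more elementary; the paper's approach, on the other hand, packages the volume-contraction step into the single operator-norm bound for $\varphi^{(e)}$ in dimension one, which is conceptually tidy. One small point worth making explicit: your constant $(qL)^e$ depends on $e$, but since $e\le\dim F$ you can replace it by $(qL)^{\dim F}$ (after arranging $qL\ge 1$) to obtain a constant depending only on $\varphi$, exactly as the paper does at the end of its proof.
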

\begin{proof}
Let us extend $\varphi$ to a rational endomorphism of $\R^n$ by extending its codomain from $\R^p$ to $\R^n$, and by letting $\varphi(x)=x$ for all $x\in F^\perp$.

First, assume that the subspace $B$ is a rational line $L$. Let $\xi=(\xi_1,\ldots,\xi_n)\in\Z^n$ be such that $\gcd(\xi_1,\ldots,\xi_n)=1$ and $L=\span(\xi)$. One has $\varphi(L)=\span(\varphi(\xi))$, and there exists $c_1(\varphi)>0$ independent of $\xi$ such that $\norme {\varphi(\xi)}\le c_1(\varphi) \norme \xi$. Let $(\zeta_1,\ldots,\zeta_n)\in\Q^n$ be the coordinates of $\varphi(\xi)$. Since $\varphi\in\M_n(\Q)$, there exists $k\in\Z\setminus\{0\}$ such that $k\varphi\in\M_n(\Z)$, so $k\varphi(\xi)\in\Z^n$, and then for all $i\in\{1,\ldots,n\}$,  $k\zeta_i\in\Z$. Let $\mathfrak b$ be the fractional ideal spanned by the $\zeta_i$, one has $k\mathfrak b=k(\zeta_1\Z+\cdots+\zeta_n\Z)\subset \Z$, thus $kN(\mathfrak b)=N(k\mathfrak b)\ge 1$. Therefore, using the generalised definition of the height of a rational subspace (see \cite{schmidt67}, Equation (1) page 432):
\[H(\varphi(L))=N(\mathfrak b)^{-1} \norme{\varphi(\xi)}\le k c_1(\varphi) \norme{\xi}=c(\varphi) \norme{\xi}=c(\varphi)H(L)\]
with $c(\varphi)=kc_1(\varphi)$. 

Let us extend the result to a rational subspace $B$ of dimension $e$. Let $N=\binom ne$ and $B^*$ be the rational line of $\R^N$ spanned by the Pl\"ucker coordinates of $B$. The hypothesis of the lemma gives $\dim\varphi(B)=\dim B$, so the rational line $\varphi(B)^*$ spanned by the Plücker coordinates of $\varphi(B)$ also belongs to $\R^N$. Notice that $H(B)=H(B^*)$ and $H(\varphi(B))=H(\varphi(B)^*)$. Let us denote by $S\in\M_n(\Q)$ the matrix of $\varphi$ in the canonical basis of $\R^n$. Then $\Lambda^e(S)\in\M_N(\Q)$, the matrix formed with all $e\times e$ minors of $S$ in lexicographic order, is the matrix of $\varphi^{(e)}$, the $e$-th compound of $\varphi$, in the canonical basis of $\R^N$. One has $\varphi(B)^*=\varphi^{(e)}(B^*)$ (see \cite{schmidt67}, page 433), so $H(\varphi(B)^*)=H(\varphi^{(e)}(B^*))$. This falls into the case of dimension $1$ in $\R^N$, therefore the first part of the proof concludes and gives a constant $c^{(e)}(\varphi)$.

Notice that the constant $c(\varphi)$ does not depend on $e$ by taking $c(\varphi)=\max_{1\le e\le n} {c}^{(e)}(\varphi)$.
\end{proof}

\section{Inclusion in a rational subspace}\label{section_inclusion_sev_raboeinboaisnv}

Here, we will focus on the case where the subspace we are trying to approach is included in a rational subspace. This will lead to several improvements on the known upper bounds for $\muexp ndej$.

First, we will state in Subsection \ref{subsection_improvements_aienoeivnovin} the new results that can be deduced from Corollary \ref{corollaire_th_inclusionsevrationnel_piarneeinvivan} of Theorem \ref{th_inclusion_sev_rationnel_apeivpinpiaenv}. Then, we will establish two lemmas in Subsection \ref{subsection_proofsofthelemmas_pevinevnivon} which will find use in the proof of the main result, Theorem \ref{th_inclusion_sev_rationnel_apeivpinpiaenv}, in Subsection \ref{subsection_preuve_th_inclusionsevrationnel_aeobindfv}.

\subsection{Improvements on some upper bounds}\label{subsection_improvements_aienoeivnovin}

First, the upper bound $\muexp 5221\le 4$ given by Theorem 16 of \cite{schmidt67} is improved (Theorem 12 of \cite{schmidt67} gives $\muexp 5221\ge 20/9$, so an equality is not obtained here).
\begin{proposition}\label{amelioration_R5_grace_inclusion_R4_aoibvoeubv}
One has
\[\muexp 5221\le 3.\]
\end{proposition}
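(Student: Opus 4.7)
The plan is to reduce the question to the ambient space $\R^4$ by applying Corollary \ref{corollaire_th_inclusionsevrationnel_piarneeinvivan}, which is itself a consequence of Theorem \ref{th_inclusion_sev_rationnel_apeivpinpiaenv}. Taking $n=5$, $k=4$, $d=e=2$, $j=1$ one first verifies the hypotheses of the corollary: $k=4\in\{2,\ldots,5\}$, $d=e=2\in\{1,2,3\}$, $d+e=4\le k$, and $j=1\le \min(d,e)=2$. The corollary then yields directly $\muexp 5221 \le \muexp 4221$.

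The second step is to invoke the previously known bound $\muexp 4221 \le 3$, which is an instance of Schmidt's Theorem 16 of \cite{schmidt67} applied in the smaller ambient space $\R^4$ (with $d=e=2$, $j=1$). Crucially, the same Schmidt theorem applied directly in the ambient $\R^5$ only yields $\muexp 5221 \le 4$, the bound being improved here; chaining it with the reduction step coming from Corollary \ref{corollaire_th_inclusionsevrationnel_piarneeinvivan} is what gives the sharper $\le 3$. Combining the two inequalities concludes the proof.

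The main obstacle behind this short argument is really the content of Theorem \ref{th_inclusion_sev_rationnel_apeivpinpiaenv}: once one knows that, for the computation of the exponent, a $2$-plane lying in a rational $4$-plane of $\R^5$ behaves exactly like its image in $\R^4$ under any rational isomorphism, the present proposition follows as a one-line combination of Corollary \ref{corollaire_th_inclusionsevrationnel_piarneeinvivan} with an already-known bound in lower ambient dimension. In particular, no explicit construction of a $2$-plane of $\R^5$ achieving exponent at most $3$ is needed: the argument is purely a reduction to the smaller-dimensional statement, with the heavy lifting being done by Theorem \ref{th_inclusion_sev_rationnel_apeivpinpiaenv}.
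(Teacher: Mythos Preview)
Your overall strategy matches the paper's: apply Corollary \ref{corollaire_th_inclusionsevrationnel_piarneeinvivan} with $(n,k,d,e,j)=(5,4,2,2,1)$ to obtain $\muexp 5221\le\muexp 4221$, then invoke a known bound for $\muexp 4221$. The gap is in your second step: you attribute the bound $\muexp 4221\le 3$ to Schmidt's Theorem~16 of \cite{schmidt67}, but that theorem does not give this. The instances of Theorem~16 quoted in the paper ($\muexp 5221\le 4$, $\muexp 9431\le 7$, $\muexp{30}{10}81\le 18$) all equal $d+e$ and are insensitive to the ambient dimension $n$; applying it in $\R^4$ would therefore still only yield $\muexp 4221\le 4$, and the reduction via Corollary \ref{corollaire_th_inclusionsevrationnel_piarneeinvivan} would gain nothing.

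The paper is explicit, in the sentence immediately following the proposition, that the required input is Theorem~1.5 of \cite{joseph21bis}, which proves $\muexp 4221=3$. Once you replace the Schmidt citation by this result, your argument is exactly the paper's.
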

The proof of Proposition \ref{amelioration_R5_grace_inclusion_R4_aoibvoeubv} requires Theorem 1.5 of \cite{joseph21bis}: $\muexp 4221=3$.

In a very similar fashion, the following proposition is deduced from Theorem 1.7 of \cite{joseph21bis}: $\muexp {2d}d{\ell}1\le 2d^2/(2d-\ell)$ for $d\ge 2$ and $\ell\in\{1,\ldots,d\}$.
\begin{proposition}\label{applicitation_deduction_Mosh_inclusion_sev_rationnel_naveoimfvn}
Let $n\ge 6$, $d\in\{3,\ldots,\lfloor n/2\rfloor\}$ and $\ell\in\{1,\ldots,d\}$, one has
\[\muexp nd{\ell}1\le\frac{2d^2}{2d-\ell}.\]
\end{proposition}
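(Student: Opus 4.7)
The plan is to deduce this statement essentially for free from Corollary \ref{corollaire_th_inclusionsevrationnel_piarneeinvivan}, which transfers an upper bound valid in a small ambient space to the same upper bound in any larger ambient space containing a rational copy of the small one. The target bound $2d^2/(2d-\ell)$ is already known in ambient dimension $2d$ by Theorem 1.7 of \cite{joseph21bis}, so all that is needed is to check that the hypotheses of the corollary apply when one specializes $k=2d$.

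Concretely, I would set $k:=2d$. The assumption $d\le\lfloor n/2\rfloor$ yields $k\in\{2,\ldots,n\}$, and the constraints $d,\ell\in\{1,\ldots,k-1\}=\{1,\ldots,2d-1\}$ together with $d+\ell\le 2d=k$ and $j=1\in\{1,\ldots,\min(d,\ell)\}$ are immediate from $d\ge 3$ and $1\le\ell\le d$. Corollary \ref{corollaire_th_inclusionsevrationnel_piarneeinvivan} then gives
\[
\muexp nd{\ell}1\ \le\ \muexp{2d}d{\ell}1.
\]

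Next I would quote Theorem 1.7 of \cite{joseph21bis}, which states that for every $d\ge 2$ and every $\ell\in\{1,\ldots,d\}$ one has $\muexp{2d}d{\ell}1\le 2d^2/(2d-\ell)$. Combining the two inequalities yields the claim. There is really no obstacle here: the content of the proposition is entirely in Corollary \ref{corollaire_th_inclusionsevrationnel_piarneeinvivan}, and the only thing to be careful about is the bookkeeping of the dimensional hypotheses $d\ge 3$, $2d\le n$, and $\ell\le d$, which all correspond to the range stated in the proposition.
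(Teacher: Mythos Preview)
Your proposal is correct and follows exactly the approach indicated in the paper: apply Corollary~\ref{corollaire_th_inclusionsevrationnel_piarneeinvivan} with $k=2d$ and then invoke Theorem~1.7 of \cite{joseph21bis}. The verification of the hypotheses is complete and accurate.
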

This proposition improves on several upper bounds when $n$ is close to $2d$ and $\ell$ is close to $d$; for instance, the upper bounds $\muexp 9431\le 7$ and $\muexp{30}{10}81\le 18$ proved by Schmidt (see \cite{schmidt67}, Theorem 16) are improved respectively to $32/5$ and $50/3$.

\subsection{Two useful lemmas to prove Theorem \ref{th_inclusion_sev_rationnel_apeivpinpiaenv}}\label{subsection_proofsofthelemmas_pevinevnivon}

Let us prove two lemmas which will find use in the proof of Theorem \ref{th_inclusion_sev_rationnel_apeivpinpiaenv} in Subsection \ref{subsection_preuve_th_inclusionsevrationnel_aeobindfv}. The first lemma studies how the proximity between two subspaces behaves when applying a projection. This proof follows the ideas of Lemma 13 of \cite{schmidt67}, though the endomorphism is not assumed to be invertible here.
\begin{lemma}\label{inegalitesurladistance_ainreoaeonbe}
Let $\mathcal R$ be a non-empty subset of $\R^n$ such that $\mathcal R\cap F^\perp=\emptyset$ and such that there exists a constant $c>0$ satisfying
\begin{equation}\label{hyp_lemme_proximite_projection_aoeibnoivnc}
\forall X\in\mathcal R,\quad \norme{p_F^\perp(X)}\ge c\norme X
\end{equation}
where $p_F^\perp$ is the orthogonal projection onto $F$. Let $D$ be a subspace of $\R^n$ such that $\dim D\ge j$ and $D\subset \mathcal R\cup\{0\}$. Then there exists a constant $c'>0$ depending only on $c$ such that
\[\psi_j(A,D)\ge c'\psi_j(A,p_F(D)).\]
\end{lemma}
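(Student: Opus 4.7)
The plan is to mirror Schmidt's argument for his Lemma 13 in \cite{schmidt67}, using the characterisation of $\psi_j$ furnished by Lemma \ref{lemma_12_Schmidt_vaeorinbofiso} to reduce the subspace inequality $\psi_j(A,D)\ge c'\psi_j(A,p_F(D))$ to a pointwise estimate comparing $\psi(X,Y)$ with $\psi(X,p_F(Y))$. The quantitative non-degeneracy built into \eqref{hyp_lemme_proximite_projection_aoeibnoivnc} will then supply such a pointwise estimate with a constant depending only on $c$.

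Concretely, setting $\lambda:=\psi_j(A,D)$, Lemma \ref{lemma_12_Schmidt_vaeorinbofiso} provides a subspace $A_0\subset A$ of the appropriate dimension such that for every $X\in A_0\setminus\{0\}$ one can pick $Y=Y(X)\in D\setminus\{0\}$ with $\psi(X,Y)\le\lambda$. Since $Y\in D\setminus\{0\}\subset\mathcal R$, inequality \eqref{hyp_lemme_proximite_projection_aoeibnoivnc} gives $\norme{p_F(Y)}\ge c\norme Y>0$, so that $Z:=p_F(Y)\in p_F(D)\setminus\{0\}$. Assuming that a pointwise bound $\psi(X,Z)\le (1/c')\psi(X,Y)$ with $c'=c'(c)>0$ has been secured, a reverse application of Lemma \ref{lemma_12_Schmidt_vaeorinbofiso} to the pair $(A_0,\,X\mapsto Z(X))$ then delivers $\psi_j(A,p_F(D))\le (1/c')\lambda$, which rearranges to the claimed inequality.

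The main obstacle is the pointwise comparison. Writing $Y=Z+Y^\perp$ with $Y^\perp:=Y-p_F(Y)\in F^\perp$, the identity $\norme{Y^\perp}^2=\norme Y^2-\norme Z^2$ combined with \eqref{hyp_lemme_proximite_projection_aoeibnoivnc} yields $\norme{Y^\perp}\le\sqrt{1-c^2}\,\norme Y$. From $X\wedge Y=X\wedge Z+X\wedge Y^\perp$, together with the orthogonal splitting $\Lambda^2(\R^n)=\Lambda^2(F)\oplus (F\wedge F^\perp)\oplus\Lambda^2(F^\perp)$, one controls $\norme{X\wedge Z}$ in terms of $\norme{X\wedge Y}$ and $\norme X\norme{Y^\perp}$; renormalising by the ratio $\norme Z/\norme Y\ge c$ then produces the required estimate with an explicit $c'=c'(c)>0$. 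This is precisely the step where the non-invertibility of $p_F$ on $\R^n$ — the only substantive deviation from Schmidt's original Lemma 13, as acknowledged by the author — is handled by the quantitative hypothesis on $\mathcal R$ instead of by the bounded-inverse property of $\varphi$ used in Schmidt's argument.
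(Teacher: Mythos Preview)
Your reduction via Lemma~\ref{lemma_12_Schmidt_vaeorinbofiso} is exactly the paper's: both arguments pick a $j$-dimensional $A_0\subset A$ realising $\psi_j(A,D)$, push each witnessing $Y\in D\setminus\{0\}$ to $Z=p_F(Y)\in p_F(D)\setminus\{0\}$, and invoke the minimality characterisation again. The difference lies in the pointwise estimate $\psi(X,Z)\le c_1\psi(X,Y)$. The paper normalises to $\norme X=\norme Y=1$ with $X\cdot Y\ge 0$, expands $\psi^2(X,Z)$ via inner products, bounds it by $(1/c)\norme{X-Y}^2$, and then needs an auxiliary geometric inequality (Claim~\ref{claim_minoration_psi_aeroibneionv}) to convert $\norme{X-Y}$ back into $\psi(X,Y)$, arriving at $c_1=\sqrt{2/c}$. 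Your exterior-algebra route is shorter and sidesteps that claim: since $X\in A\subset F$ (this inclusion is implicit from the ambient Theorem~\ref{th_inclusion_sev_rationnel_apeivpinpiaenv} and is precisely what makes your orthogonal splitting applicable), one has $X\wedge Z\in\Lambda^2(F)$ and $X\wedge Y^\perp\in F\wedge F^\perp$, whence the Pythagorean identity in $\Lambda^2(\R^n)$ gives $\norme{X\wedge Z}\le\norme{X\wedge Y}$ outright. Dividing by $\norme X\,\norme Z\ge c\,\norme X\,\norme Y$ yields $\psi(X,Z)\le(1/c)\,\psi(X,Y)$, i.e.\ $c'=c$. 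Incidentally, your intermediate bound $\norme{Y^\perp}\le\sqrt{1-c^2}\,\norme Y$ and the term $\norme X\,\norme{Y^\perp}$ are superfluous once the orthogonality in $\Lambda^2$ is used; the inequality $\norme{X\wedge Z}\le\norme{X\wedge Y}$ needs neither.
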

\begin{proof}
Hypothesis \eqref{hyp_lemme_proximite_projection_aoeibnoivnc} gives a constant $c>0$ such that for all $X\in\mathcal R$, $c\norme X\le \norme{p_F^\perp(X)}$. In particular $c\le 1$ since $p_F^\perp$ is a projection, therefore we may assume that $F\setminus\{0\}\subset \mathcal R$ since $\norme{p_F^\perp(X)}=\norme X$ for any $X\in F$. Our first goal is to show that there exists a constant $c_1>0$ (depending only on $c>0$), such that
\begin{equation}\label{inegalite_projection_aerinbaoeinvipnczd}
\forall X\in F\setminus\{0\},\quad \forall Y\in\mathcal R,\quad \psi(X,p_F^\perp(Y))\le c_1\psi(X,Y).
\end{equation}
Let $X\in F\setminus\{0\}$ and $Y\in\mathcal R$. Without loss of generality, assume that $\norme X=\norme Y=1$ and $X\cdot Y\ge 0$. One has $\psi^2(X,p_F^\perp(Y))=(\norme{p_F^\perp(Y)}^2-(X\cdot p_F^\perp(Y))^2)/\norme{p_F^\perp(Y)}^2$. Let $\lambda=\norme{p_F^\perp(Y)}$ and notice that $0\le(X\cdot p_F^\perp(Y)-\lambda)^2$ leads to $\lambda^2-(X\cdot p_F^\perp(Y))^2\le 2(\lambda^2-\lambda(X\cdot p_F^\perp(Y)))$. Thus,
\[\psi^2(X,p_F^\perp(Y))\le2\frac {\norme{p_F^\perp(Y)}-X\cdot p_F^\perp(Y)}{\norme{p_F^\perp(Y)}}.\]
Using Hypothesis \eqref{hyp_lemme_proximite_projection_aoeibnoivnc}, $\psi^2(X,p_F^\perp(Y))\le \displaystyle\frac 2c\left(\norme{p_F^\perp(Y)}-X\cdot p_F^\perp(Y)\right)$, but
\[\frac 12\left(\norme{X-p_F^\perp(Y)}^2-1-\norme{p_F^\perp(Y)}^2\right)=-X\cdot p_F^\perp(Y),\]
so
\begin{equation}\label{majoration_oeztinoanrevzomefbnoubf}
\psi^2(X,p_F^\perp(Y))\le\frac 1{c}\norme{p_F^\perp(X-Y)}^2-\frac1{c}\left(\norme{p_F^\perp(Y)}-1\right)^2\le\frac {1}{c}\norme{X-Y}^2.
\end{equation}

Let us mention an elementary geometric claim.
\begin{claim}\label{claim_minoration_psi_aeroibneionv}
Let $U$ and $V$ be unitary vectors such that $U\cdot V\ge 0$. One has $\psi(U,V)\ge \frac{\sqrt 2}2\norme{U-V}$.
\end{claim}
\begin{proof}[Proof of Claim \ref{claim_minoration_psi_aeroibneionv}.]
Let $p_{\span(V)}^\perp$ be the orthogonal projection onto $\span(V)$, $\alpha=\Vert U-p_{\span(V)}^\perp(U)\Vert$ and $\beta=\Vert V-p_{\span(V)}^\perp(U)\Vert$. One has $\norme{U-V}^2=\alpha^2+\beta^2$, and since $U$ is unitary: $\psi(U,V)=\psi(U,p_{\span(V)}^\perp(U))=\Vert U-p_{\span(V)}^\perp(U)\Vert=\alpha$. Moreover, $U\cdot V\ge 0$, so $1=\norme{U}^2=(1-\beta)^2+\alpha^2$, hence there exists $\theta\in[0,\pi/2]$ such that $1-\beta=\cos\theta$ and $\alpha=\sin\theta$. Since $1-\cos\theta\le \sin\theta$, we obtain $\beta\le \alpha$, and finally $\norme{U-V}^2\le 2\alpha^2=2\psi(U,V)^2$.
\end{proof}

Since $X\cdot Y\ge 0$ here, Claim \ref{claim_minoration_psi_aeroibneionv} gives $\psi(X,Y)\ge\frac {\sqrt 2}2\norme{X-Y}$, so with Inequality \eqref{majoration_oeztinoanrevzomefbnoubf}, it yields
\[\psi(X,p_F^\perp(Y))\le c_1\psi(X,Y)\]
which is the desired Inequality \eqref{inegalite_projection_aerinbaoeinvipnczd}, with $c_1=\sqrt{2/c}$. 

For the second part of the proof, Lemma \ref{lemma_12_Schmidt_vaeorinbofiso} tells us that there exists a subspace $A_j\subset A$ of dimension $j$, such that for all $X\in A_j\setminus\{0\}$, there exists $Y\in D\setminus\{0\}$ such that $\psi(X,Y)\le \psi_j(A,D)$. Let $X\in A_j\setminus\{0\}$ and $Y\in D\setminus\{0\}$ be such that $\psi(X,Y)\le \psi_j(A,D)$. Since $X\in A_j\subset A\subset F$ and $Y\in D\setminus\{0\}\subset \mathcal R$, one can use Inequality \eqref{inegalite_projection_aerinbaoeinvipnczd} to get $\psi(X,p_F^\perp(Y))\le c_1\psi(X,Y)\le c_1\psi_j(A,D)$. Thus, $Y'=p_F^\perp(Y)\in p_F^\perp(D)$ is a non-zero vector such that $\psi(X,Y')\le c_1\psi_j(A,D)$; therefore
\[\forall X\in A\setminus\{0\},\quad \exists Y'\in p_F^\perp(D)\setminus\{0\},\quad \psi(X,Y')\le c_1\psi_j(A,D).\]
According to Lemma \ref{lemma_12_Schmidt_vaeorinbofiso}, $\psi_j(A,p_F^\perp(D))$ is the smallest number having this property, so
\[\psi_j(A,p_F^\perp(D))\le c_1\psi_j(A,D).\]
\end{proof}

The second lemma shows that one can choose rational subspaces approaching $A$ which intersect $F^\perp$ trivially. 
\begin{lemma}\label{lemme_intersection_orthogonal_est_vide_aorfbaovbd}
Under the hypothesis of Theorem \ref{th_inclusion_sev_rationnel_apeivpinpiaenv}, for all $\alpha<\muexpA nAej$, there exists a sequence $(B_N)_{N\in\N}$ of rational subspaces of $\R^n$ of dimension $e$, pairwise distinct, such that for any $N$ large enough: $B_N\cap F^\perp=\{0\}$ and $\psi_j(A,B_N)\le H(B_N)^{-\alpha}$.
\end{lemma}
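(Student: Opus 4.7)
My plan is to approximate $A$ by rational subspaces lying inside $F$. Fix $\alpha'\in(\alpha,\muexpA nAej)$; by definition of $\muexpA nAej$ there is a sequence $(B_N)_{N\in\N}$ of pairwise distinct rational subspaces of dimension $e$ in $\R^n$ satisfying $\psi_j(A,B_N)\le H(B_N)^{-\alpha'}$, and necessarily $H(B_N)\to+\infty$. If infinitely many $B_N$ already satisfy $B_N\cap F^\perp=\{0\}$, a subsequence settles the claim; otherwise I may assume, after extraction, that $s_N:=\dim(B_N\cap F^\perp)$ is a fixed integer $s\ge 1$. Since $C_N:=B_N\cap F^\perp$ is orthogonal to $A\subset F$, the orthogonal decomposition $B_N=(B_N\cap C_N^\perp)\overset\perp\oplus C_N$ makes the last $s$ canonical angles between $A$ and $B_N$ equal to $\pi/2$, which forces $s\le e-j$.

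I then set $L_N:=p_F^\perp(B_N)\subset F$, a rational subspace of dimension $e-s$, and pick a fixed rational subspace $C'\subset F$ of dimension $s$ with $C'\cap L_N=\{0\}$ for infinitely many $N$. To do this I fix a $\Z$-basis $(f_1,\ldots,f_k)$ of $F\cap\Z^n$ and observe that, since $e\le k$, for any $(e-s)$-dimensional subspace $L\subset F$ there exists $I\subset\{1,\ldots,k\}$ of size $s$ with $\span(f_i\colon i\in I)\cap L=\{0\}$; pigeonhole over the finitely many such $I$ gives a single $C'=\span(f_i\colon i\in I_0)$ that works along an infinite subsequence. Then $\tilde B_N:=L_N+C'\subset F$ is a rational subspace of dimension $e$ with $\tilde B_N\cap F^\perp=\{0\}$ trivially.

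To control $\psi_j(A,\tilde B_N)$, I use Schmidt's canonical bases (Theorem~4 of \cite{schmidt67}). Because $C_N\perp A$, the first $j$ canonical vectors $Y_1,\ldots,Y_j$ of $B_N$ can be chosen inside $B_N\cap C_N^\perp$; letting $D=\span(Y_1,\ldots,Y_j)$, one has $\psi_j(A,D)=\psi_j(A,B_N)$. Writing $Y_i=\cos\theta_i X_i+\sin\theta_i Z_i$ with $X_i\in A\subset F$ yields the bound $\norme{Y}^2-\norme{p_F^\perp(Y)}^2\le j\,\psi_j(A,B_N)^2\norme{Y}^2$ for every $Y\in D$, so $\norme{p_F^\perp(Y)}\ge\frac12\norme{Y}$ once $N$ is large enough. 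Lemma~\ref{inegalitesurladistance_ainreoaeonbe} applied with $\mathcal R=D\setminus\{0\}$ then provides a uniform constant $c_1>0$ with $\psi_j(A,p_F^\perp(D))\le c_1\psi_j(A,D)=c_1\psi_j(A,B_N)$; since $p_F^\perp(D)\subset L_N\subset\tilde B_N$, I conclude $\psi_j(A,\tilde B_N)\le c_1 H(B_N)^{-\alpha'}$.

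For the height, three successive applications of the Pl\"ucker-type inequality $H(U)H(V)\ge H(U+V)H(U\cap V)$, first with $(U,V)=(B_N,F^\perp)$, then with $(B_N+F^\perp,F)$ (noting $(B_N+F^\perp)\cap F=L_N$ and $B_N+F^\perp+F=\R^n$), and finally with $(L_N,C')$, give $H(\tilde B_N)\le H(F)^2 H(C')H(B_N)/H(C_N)\le c_2 H(B_N)$. Combining, $\psi_j(A,\tilde B_N)\le c_3 H(\tilde B_N)^{-\alpha'}\le H(\tilde B_N)^{-\alpha}$ for $N$ large enough. If $H(\tilde B_N)$ were bounded, the $\tilde B_N$ would take only finitely many values, so some rational subspace $\tilde B\subset F$ of dimension $e$ would satisfy $\psi_j(A,\tilde B)=0$, i.e.\ $\dim(A\cap\tilde B)\ge j$, contradicting hypothesis~\eqref{condition_dirr_sur_A_ameirnmoafvonsd}; a further extraction then yields the required sequence of pairwise distinct $\tilde B_N$. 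The main delicate point, I expect, is justifying the orthogonal splitting of the canonical angles so that the first $j$ canonical vectors really can be chosen inside $B_N\cap C_N^\perp$.
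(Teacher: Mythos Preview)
Your argument is essentially correct and takes a genuinely different route from the paper. The paper never projects the $B_N$ onto $F$; instead it builds a finite family of rational automorphisms $\rho_I$ of $\R^n$ that fix $F$ pointwise and send various rational complements $G_I$ of $F$ onto $F^\perp$, proves by a combinatorial argument that for each $B_N$ at least one $\rho_I$ yields $\rho_I(B_N)\cap F^\perp=\{0\}$, and then controls angle and height via Lemmas~\ref{lemma_13_schmidt_beoifnboinfsoin} and~\ref{inegalitesurlahauteur_aroibvaoivbnoi} (both of which apply to invertible maps). Your approach instead produces $\tilde B_N\subset F$, which is strictly stronger than $\tilde B_N\cap F^\perp=\{0\}$; this effectively anticipates the projection step that the paper performs later in the proof of Theorem~\ref{th_inclusion_sev_rationnel_apeivpinpiaenv}, at the price of invoking Lemma~\ref{inegalitesurladistance_ainreoaeonbe} here rather than there, and of relying on Schmidt's height inequality $H(U)H(V)\ge H(U+V)H(U\cap V)$ (his 1967 paper) which the present paper does not use.

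Two small remarks on your write-up. First, the assertion that ``the last $s$ canonical angles between $A$ and $B_N$ equal $\pi/2$'' is not literally correct when $d<e-s$ (there are only $\min(d,e)$ angles, and in that regime none need be $\pi/2$); however $s\le e-j$ follows more directly by observing that $C_N\subset B_N\cap A^\perp$ and that $\psi_j(A,B_N)<1$ forces $\dim(B_N\cap A^\perp)\le e-j$---and in any case you never actually use $s\le e-j$ afterwards. Second, the ``delicate point'' you flag is not needed: the decomposition $Y_i=\cos\theta_i\,X_i+\sin\theta_i\,Z_i$ with $X_i\in A\subset F$ comes straight from Schmidt's canonical form regardless of whether $Y_i\in B_N\cap C_N^\perp$, and your bound $\norme{Y-p_F^\perp(Y)}^2\le j\,\psi_j(A,B_N)^2\norme{Y}^2$ for $Y\in D$ follows from that alone. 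So the step you worried about is in fact routine.
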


\begin{proof}[Proof of Lemma \ref{lemme_intersection_orthogonal_est_vide_aorfbaovbd}.]
Let $\alpha'$ be such that $\alpha<\alpha'<\muexpA nAej$. By definition of $\muexpA nAej$, there exists a sequence $(B_N)_{N\in\N}$ of rational subspaces of $\R^n$ of dimension $e$, pairwise distinct, such that for any $N$ large enough: $\psi_j(A,B_N)\le H(B_N)^{-\alpha'}$.

Let $(g_1,\ldots,g_{n-k})$ be a linearly independent family of vectors of $\Q^n$ such that $\R^n=F\oplus\span(g_1,\ldots,g_{n-k})$. For $\ell\in\N^*$, let $\mathcal P_m(\ell)$ be the set of subsets with $m$ elements of $\{1,\ldots,\ell\}$. Let us show by induction on $\ell\in\{n-k,\ldots,n\}$ that there exist vectors $g_{n-k+1},\ldots,g_\ell$ of $\Q^n$ such that
\begin{equation}\label{rec_construction_gi_inclusionsevrationnel_aepirbniv}
\begin{cases}\dim\span(g_1,\ldots,g_\ell)=\ell,\\\forall I\in\mathcal P_{n-k}(\ell),\quad G_I\cap F=\{0\};\end{cases}
\end{equation} 
here and below, for $I\subset \{1,\ldots,\ell\}$ we let $G_I=\span\{g_i,\ i\in I\}$. Since $\dim F=k$, the initial case $\ell=n-k$ holds by definition of the $g_1,\ldots,g_{n-k}$. Let $\ell\in\{n-k,\ldots,n-1\}$, assume that the $g_i$ for $i\in\{n-k+1,\ldots,\ell\}$ have been constructed satisfying \eqref{rec_construction_gi_inclusionsevrationnel_aepirbniv}. Let
\[G=\span(g_1,\ldots,g_\ell)\cup\bigcup_{K\in\mathcal P_{n-k-1}(\ell)} \big(F\oplus G_K\big).\]
The set $G$ is an union of a finite number of subspaces of dimensions at most $n-1$, thus there exists a vector $g_{\ell+1}\in\Q^n\setminus G$. Notice that $\dim\span(g_1,\ldots,g_{\ell+1})=\ell+1$. Let us assume that there exists $I\in\mathcal P_{n-k}(\ell+1)$ such that $G_I\cap F\ne\{0\}$. Let $u\in G_I\cap F\setminus\{0\}$; by the induction hypothesis, $I\notin\mathcal P_{n-k}(\ell)$, so $\ell+1\in I$. Let us write $I$ under the form $I=K\cup\{\ell+1\}$ with $K\in\mathcal P_{n-k-1}(\ell)$. Since $u\notin F\cap G_K=\{0\}$, there exist $\alpha_{\ell+1}\ne 0$ and some $\alpha_i\in\R$ (for $i\in K$) such that $u=\alpha_{\ell+1}g_{\ell+1}+\sum_{i\in K}\alpha_ig_i$, so
\[g_{\ell+1}=\frac 1{\alpha_{\ell+1}}\left(u-\sum_{i\in K}\alpha_ig_i\right)\in F\oplus G_K,\]
which can not be by definition of $G$, because $g_{\ell+1}\notin G$. Thus, $g_{\ell+1}$ satisfies \eqref{rec_construction_gi_inclusionsevrationnel_aepirbniv} and therefore the induction is complete; it is now established that there exist vectors $g_1,\ldots,g_n$ satisfying \eqref{rec_construction_gi_inclusionsevrationnel_aepirbniv}.

For $I\in\mathcal P_{n-k}(n)$, one has $\dim G_I=n-k=\dim F^\perp$. So $G_I\oplus F=\R^n$: there exists a rational isomorphism $\rho_I\in\GL_n(\Q)$ such that ${\rho_I}_{\vert F}=\id_F$ and $\rho_I(G_I)=F^\perp$. 
Let $N\in\N$ and let us assume that
\begin{equation}\label{equation_pourtoutI_rhoIBNFperpegal0_banfpibnv}
\forall I\in\mathcal P_{n-k}(n),\quad \rho_I(B_N)\cap F^\perp\ne\{0\},
\end{equation}
which is equivalent by definition of $\rho_I$ to $B_N\cap G_I\ne\{0\}$ for any $I\in\mathcal P_{n-k}(n)$.
Let
\[J=\left\{i\in\{1,\ldots,n\},\ \exists \alpha\ne 0,\quad\exists\lambda_1,\ldots,\lambda_{i-1}\in\R,\quad \alpha g_i +\sum_{\ell=1}^{i-1} \lambda_\ell g_\ell\in B_N\right\}.\]
First, assume that $\abs J\le k$. Thus, there exists $I\in\mathcal P_{n-k}(n)$ such that $I\cap J=\emptyset$. Since $B_N\cap G_I\ne\{0\}$, there exists a non-zero vector $(\beta_i)_{i\in I}\in\R^{I}$ such that $\sum_{i\in I}\beta_i g_i\in B_N$. Let $i_0$ be the largest $i\in I$ such that $\beta_i\ne 0$. Then 
\[\sum_{i\in I}\beta_i g_i=\alpha g_{i_0}+\sum_{i=1}^{i_0-1}\beta_i g_i\]
where $\beta_i=0$ if $i\notin I$ and $\alpha=\beta_{i_0}\ne 0$. So $i_0\in I\cap J$ which can not be, therefore $\abs J>k$. The elements of $J$ give at least $k+1$ linearly independent vectors of $B_N$, which can not be since $\dim B_N\le k$. Thus, \eqref{equation_pourtoutI_rhoIBNFperpegal0_banfpibnv} is established.

Let $I\in\mathcal P_{n-k}(n)$ be such that $\rho_I(B_N)\cap F^\perp=\{0\}$. Since $\rho_I$ is invertible, Lemma \ref{lemma_13_schmidt_beoifnboinfsoin} gives a constant $c(\rho_I)>0$ such that
\[\psi_j(A,\rho_I(B_N))=\psi_j(\rho_I(A),\rho_I(B_N))\le c(\rho_I)\psi_j(A,B_N).\]
Let $c_2=\max_{I\in\mathcal P_{n-k}(n)}c(\rho_I)>0$, which is a constant independent of $B_N$; then we have $\psi_j(A,\rho_I(B_N))\le c_2\psi_j(A,B_N)$. Moreover, since $\rho_I$ is an isomorphism, $\dim (\rho_I(B_N))=\dim(B_N)=e$, so Lemma \ref{inegalitesurlahauteur_aroibvaoivbnoi} gives a constant $c'(\rho_I)>0$ such that $H(\rho_I(B_N))\le c'(\rho_I)H(B_N)$. Let $c_3=\max_{I\in\mathcal P_{n-k}(n)}c'(\rho_I)>0$ which is a constant independent of $B_N$ such that $H(\rho_I(B_N))\le c_3H(B_N)$. Therefore, for all $N$ large enough:
\[\psi_j(A,\rho_I(B_N))\le c_2\psi_j(A,B_N)\le \frac{c_2}{H(B_N)^{\alpha'}}\le \frac{c_2c_3^{-\alpha'}}{H(\rho_I(B_N))^{\alpha'}}\le \frac 1{H(\rho_I(B_N))^\alpha}.\]
\end{proof}

\subsection{Proof of Theorem \ref{th_inclusion_sev_rationnel_apeivpinpiaenv}}\label{subsection_preuve_th_inclusionsevrationnel_aeobindfv}

Let us provide a proof of the main theorem.
\begin{proof}[Proof of Theorem \ref{th_inclusion_sev_rationnel_apeivpinpiaenv}.]
First, let us prove that $A\in\mathfrak I_n(d,e)_j$ and $\tilde A\in\mathfrak I_k(d,e)_j$.

Let $B\in\mathfrak R_n(e)$. Notice that $B'=B\cap F$ is a rational subspace of dimension $e'\le e\le \dim F$. Thus, there exists a rational subspace $B''\subset F$, containing $B'$, and such that $\dim B''=e$. Hypothesis \eqref{condition_dirr_sur_A_ameirnmoafvonsd} of Theorem \ref{th_inclusion_sev_rationnel_apeivpinpiaenv} gives $\dim(A\cap B'')<j$, therefore $\dim(A\cap B')<j$. Since $A\cap B=A\cap F\cap B=A\cap B'$ because $A\subset F$, one has $\dim(A\cap B)<j$, \emph{i.e.} $A\in\mathfrak I_n(d,e)_j$.

Let $\tilde B\in\mathfrak R_k(e)$ and $B=\varphi^{-1}(\tilde B)\in\mathfrak R_n(e)$. Since $\varphi$ is an isomorphism, one has $\dim(\tilde A\cap \tilde B)=\dim(\varphi(A)\cap\varphi(B))=\dim(\varphi(A\cap B))=\dim(A\cap B)<j$ because $B\in\mathfrak R_n(e)$ and $A\in\mathfrak I_n(d,e)_j$. This shows that $\tilde A\in\mathfrak I_k(d,e)_j$. \\

Now, let us show that $\muexpA nAej\ge \muexpA k{\tilde A}ej$. Let $\alpha<\muexpA k{\tilde A}ej$. There exists a sequence $(\tilde B_N)_{N\ge 0}$ of rational subspaces of $\R^k$ of dimension $e$, pairwise distinct, such that for all $N$ large enough: $\psi_j(\tilde A,\tilde B_N)\le H(\tilde B_N)^{-\alpha}$.
For all $N\in\N$, let $B_N=\varphi^{-1}(\tilde B_N)\in\mathfrak R_n(e)$ because $\varphi$ is a rational isomorphism. According to Lemma \ref{inegalitesurlahauteur_aroibvaoivbnoi}, there exists a constant $c_{\varphi^{-1}}$ such that for all $N\in\N$, $H(B_N)=H(\varphi^{-1}(\tilde B_N))\le c_{\varphi^{-1}}H(\tilde B_N)$. Using Lemma \ref{lemma_13_schmidt_beoifnboinfsoin}, there exists a constant $c_{\varphi^{-1}}'>0$ such that $\psi_j(A,B_N)=\psi_j(\varphi^{-1}(\tilde A),\varphi^{-1}(\tilde B_N))\le c_{\varphi^{-1}}'\psi_j(\tilde A,\tilde B_N)$. Therefore, for $N$ large enough,
\[\psi_j(A,B_N)\le c_{\varphi^{-1}}'\psi_j(\tilde A,\tilde B_N)\le \frac{c_{\varphi^{-1}}'}{H(\tilde B_N)^\alpha}\le \frac {c_1}{H(B_N)^\alpha},\]
with $c_1>0$ depending only on $\varphi$. Since the $B_N$ are pairwise distinct, $\muexpA nAej\ge\alpha$, and since this is true for all $\alpha<\muexpA k{\tilde A}ej$, one has
\[\muexpA nAej\ge \muexpA k{\tilde A}ej.\]

Finally, let us establish that $\muexpA nAej\le \muexpA k{\tilde A}ej$. Let $\alpha<\muexpA nAej$. Lemma \ref{lemme_intersection_orthogonal_est_vide_aorfbaovbd} gives us a sequence $(B_N)_{N\in\N}$ of rational subspaces of $\R^n$ of dimension $e$, pairwise distinct, such that for all $N$ large enough:
\begin{equation}\label{def_des_B_N_inclusionsevrationnel_aboemfbv}
B_N\cap F^\perp=\{0\}\quad\text{ and }\quad\psi_j(A,B_N)\le\frac{1}{H(B_N)^\alpha}.
\end{equation}
Let $N\in\N$ large enough; let us denote by $p_F^\perp$ the orthogonal projection onto $F$. Since $F\in\mathfrak R_n(k)$, $p_F^\perp$ is a rational endomorphism of $\R^n$. Let $B_N'=p_F^\perp(B_N)$; since $B_N$ is a rational subspace, $B_N'$ also is. Let $\mathcal R$ be the set of all non-zero vectors of $\R^n$ which form an angle lower than $\pi/4$ with the subspace $F$:
\[\mathcal R=\left\{X\in\R^n\setminus\{0\},\ \psi_1(F,\span(X))< \frac{\sqrt 2}{2}\right\}.\]
Notice that $\mathcal R\cap F^\perp=\emptyset$. Let $X\in\mathcal R$ and $Y=X-p_F^\perp(X)$; then $\norme{p_F^\perp(X)}^2+\norme Y^2=\norme X^2$. Since $X\in\mathcal R$, one has $\psi(X,F)=\psi(X,p_F^\perp(X))=\norme Y / \norme X <\sqrt 2/2$, so $\norme Y\le(\sqrt 2/2)\norme X$. Thus, $\norme{p_F^\perp(X)}^2=\norme X^2-\norme Y^2\ge \norme X^2-\frac 12 \norme X^2=\frac 12 \norme X^2$,
so the set $\mathcal R$ satisfies Hypothesis \eqref{hyp_lemme_proximite_projection_aoeibnoivnc} of Lemma \ref{inegalitesurladistance_ainreoaeonbe} with $c=\sqrt 2/2$.

According to Lemma \ref{lemma_12_Schmidt_vaeorinbofiso}, $\psi_j(A,B_N)$ is the smallest number $\lambda$ for which there exists a subspace $B_{N,j}$ of dimension $j$ such that for every $Y\in B_{N,j}\setminus\{0\}$, there is a vector $X\in A\setminus\{0\}$ such that $\psi(X,Y)\le \lambda$. Let us fix such a subspace $B_{N,j}$. Since $N$ is assumed to be large enough, $\psi_j(A,B_N)\le 1/2$ can be assumed. Therefore, for all $Y\in B_{N,j}\setminus\{0\}$, there exists a non-zero vector $X\in A\subset F$ such that $\psi(X,Y)\le \psi_j(A,B_N)\le 1/2$, so $\psi_1(F,\span(Y))\le 1/2<\sqrt 2/2$, hence $Y\in\mathcal R$. Thus, for all $N$ large enough: $B_{N,j}\setminus\{0\}\subset \mathcal R$.

Applying Lemma \ref{inegalitesurladistance_ainreoaeonbe} provides a constant $c_4>0$ which depends neither on $A$ nor on $B_N$, such that
\begin{equation}\label{minoration_psijAB_N_gaepifbaiondv}
\psi_j(A,B_N)=\psi_j(A,B_{N,j})\ge c_4\psi_j(A,p_F^\perp(B_{N,j}))\ge c_4\psi_j(A,B_N')
\end{equation}
because $B_N'=p_F^\perp(B_N)\supset p_F^\perp(B_{N,j})$. Since $B_N\cap F^\perp=\{0\}$, $\dim B_N'=e$; since $B_N'\subset F$, let $\tilde B_N=\varphi(B_N')\in\mathfrak R_k(e)$.  Using Lemma \ref{lemma_13_schmidt_beoifnboinfsoin}, there exists a constant $c_{\varphi}>0$ such that $\psi_j(\varphi(A),\varphi(B_N'))\le c_\varphi\psi_j(A,B_N')$. Let $\beta>\muexpA k{\tilde A}ej$; using Inequality \eqref{minoration_psijAB_N_gaepifbaiondv} yields that for all $N$ large enough (in terms of $\beta$): 
\[\psi_j(A,B_N)\ge c_4\psi_j(A,B_N') \ge c_4c_{\varphi}^{-1}\psi_j(\varphi(A),\varphi(B_N'))=c_4c_{\varphi}^{-1}\psi_j(\tilde A,\tilde B_N)\ge \frac{c_5}{H(\tilde B_N)^\beta}\]
with $c_5>0$. According to Lemma \ref{inegalitesurlahauteur_aroibvaoivbnoi}, there exists a constant $c_6>0$ such that $H(\tilde B_N)=H(\varphi(B_N'))\le c_6H(B_N')$, so $\psi_j(A,B_N)\ge c_7 H(B_N')^{-\beta}$ with $c_7>0$. Since $B_N\cap F^\perp=\{0\}$, $\dim (p_F^\perp(B_N))=\dim(B_N)$. Therefore, Lemma \ref{inegalitesurlahauteur_aroibvaoivbnoi} can be used again to obtain a constant $c_8>0$ such that $H(B_N')=H(p_F^\perp(B_N))\le c_8 H(B_N)$. With Inequality \eqref{def_des_B_N_inclusionsevrationnel_aboemfbv}, there exists a constant $c_9>0$ such that
\[\frac 1{H(B_N)^\alpha}\ge \psi_j(A,B_N)\ge \frac{c_9}{H(B_N)^\beta}.\]

Finally, since $H(B_N)$ tends to infinity when $N\to+\infty$, $\alpha\le\beta$. Because this is true for all $\alpha<\muexpA nAej$ and for all $\beta>\muexpA k{\tilde A}ej$, one has
\[\muexpA nAej\le \muexpA k{\tilde A}ej.\]
\end{proof}

\section{The spectrum of $\muexpA n{\bullet}{\ell}\ell$}\label{section_spectre_oairenboifnv}

In this section, progress will be made on the determination of the spectrum of $\muexpA n{\bullet}{\ell}\ell$ over $\mathfrak I_n(\ell,\ell)_\ell$, \emph{i.e.} on the set $\muexpA n{\mathfrak I_n(\ell,\ell)_\ell}{\ell}\ell$. The main result is Theorem \ref{theoreme_spectre_amoeribnefaomnv}: let $n\ge 2$ and $\ell\in\{1,\ldots,\lfloor n/2\rfloor\}$, one has
\[\left[1+\frac 1{2\ell}+\sqrt{1+\frac{1}{4\ell^2}},+\infty\right]\subset\Big\{\muexpA nA\ell\ell,\ A\in\mathfrak I_n(\ell,\ell)_\ell\Big\}.\]
\begin{remark}
In \cite{saxce20}, N. de Saxcé shows that $\muexp n\ell\ell\ell\le n/(\ell(n-\ell))$. It would be interesting to establish that
\[\left[\frac{n}{\ell(n-\ell)},+\infty\right]\subset\Big\{\muexpA nA\ell\ell,\ A\in\mathfrak I_n(\ell,\ell)_\ell\Big\}.\]
\end{remark}

To prove Theorem \ref{theoreme_spectre_amoeribnefaomnv}, it is first assumed that $n=2\ell$, and that $\beta<+\infty$ is fixed in the interval of Theorem \ref{theoreme_spectre_amoeribnefaomnv}; a subspace approximated exactly to the order $\beta$ is constructed. First, we establish that the subspace constructed satisfies $A\in\mathfrak I_{2\ell}(\ell,\ell)_\ell$, then we show that $\muexpA {2\ell}A\ell\ell\ge\beta$, and finally that $\muexpA{2\ell}A\ell\ell\le\beta$. The result will be finally extended to the case $n>2\ell$ with Theorem \ref{th_inclusion_sev_rationnel_apeivpinpiaenv}, and to the case $\beta=+\infty$.

\begin{proof}[Proof of Theorem \ref{theoreme_spectre_amoeribnefaomnv}.]
Let $\ell\ge 1$ be an integer and $n=2\ell$. Let $\beta$ be a real number such that
\begin{equation}\label{condition_sur_beta_aomeibnoaidbnv}
\beta\ge1+\frac 1{2\ell}+\sqrt{1+\frac1{4\ell^2}}.
\end{equation}
The goal is to construct $A\in\mathfrak I_n(\ell,\ell)_\ell$, a subspace $(\ell,\ell)$-irrational of $\R^n$ such that $\muexpA nA\ell\ell=\beta$. Let $\alpha=\ell\beta$; for all $(i,j)\in\{1,\ldots,\ell\}^2$ let
\[\xi_{i,j}=\sum_{k=0}^\infty \frac{e^{(i,j)}_k}{\theta^{\lfloor\alpha^k\rfloor}}\]
where the $(e^{(i,j)}_k)_{k\in\N}$ are sequences which are yet to be determined, with values in $\{1,2\}$ if $i\ne j$ and with values in $\{2\ell,2\ell+1\}$ if $i=j$, and where $\theta$ is the smallest prime number such that
\begin{equation}\label{hyp_sur_theta_amorinvamnoavcbo}
\theta>(n+1)^{n/2}\left(\frac n2\right)!=\ell!\,(2\ell+1)^{\ell}.
\end{equation}
\begin{remark}
The fact that $\theta$ is chosen to be the \emph{smallest} such number does not have any other purpose but to allow the constants not to depend on $\theta$. In practice, any prime number $\theta$ satisfying Inequality \eqref{hyp_sur_theta_amorinvamnoavcbo} would work. Hypothesis \eqref{hyp_sur_theta_amorinvamnoavcbo} on $\theta$ and the fact that the sequences $(e_k^{(i,j)})_{k\in\N}$ belong to $\{1,2\}$ or $\{2\ell,2\ell+1\}$ will be used in the proof of Claim \ref{lemmeminorationhauteurdesBN_aeoinmoaeivnv} to establish a lower bound on the height of the rational subspaces constructed below.
\end{remark}

Let $I_\ell$ be the identity matrix of $\M_\ell(\R)$, $M_\xi=(\xi_{i,j})_{(i,j)\in\{1,\ldots,\ell\}^2}\in\M_\ell(\R)$ be the matrix of the $\xi_{i,j}$, and $M_A$ be the block matrix:
\begin{equation}\label{def_MA_matricedeA_armoeigaoinv}
M_A=\begin{pmatrix} I_\ell \\ M_\xi \end{pmatrix}\in\M_{2\ell,\ell}(\R).
\end{equation}
Let us denote by $Y_1,\ldots,Y_{\ell}\in\R^{2\ell}$ the columns of $M_A$, and let $A$ be the subspace of $\R^{2\ell}$ spanned by the $Y_i$: $A=\span(Y_1,\ldots,Y_\ell)$. Notice that $\rk(M_A)=\ell$, so $\dim A=\ell$.

Let us establish that there exist sequences $(e^{(i,j)}_k)_{k\in\N}$ with values in $\{1,2\}$ if $i\ne j$ and with values in $\{2\ell,2\ell+1\}$ if $i=j$, such that $A\in\mathfrak I_{n}(\ell,\ell)_\ell$.

For better clarity, let us reindex the $\xi_{i,j}$ for $(i,j)\in\{1,\ldots,\ell\}^2$ as $\xi_1,\ldots,\xi_{\ell^2}$ by lexicographic order; the sequences $(e_k^{(i,j)})_{k\in\N}$ for $(i,j)\in\{1,\ldots,\ell\}^2$ are also reindexed as $(e_k^{(1)})_{k\in\N},\ldots,(e_k^{(\ell^2)})_{k\in\N}$ in the same way.

Let us prove by induction on $t\in\{1,\ldots,\ell^2\}$ that the sequences $(e_k^{(1)})_{k\in\N},\ldots,(e_k^{(t)})_{k\in\N}$ can be chosen such that $\xi_1,\ldots,\xi_{t}$ are $\Q$-algebraically independent. The irrationality exponent of $\xi_1$ is at least $\alpha>2$ (it is even equal to $\alpha$, see \cite{levesley06}), so with Roth's theorem (see \cite{roth55}), $\xi_1$ is transcendental. Let $t\in\{1,\ldots,\ell^2-1\}$ and let us assume that the numbers $\xi_1,\ldots,\xi_t$ are $\Q$-algebraically independent. The set of real numbers algebraic on $\Q(\xi_1,\ldots,\xi_t)$ is countable, whereas the set of the sequences $(e_k^{(t+1)})_{k\in\N}$ is not. Therefore, one can choose a sequence $(e_k^{(t+1)})_{k\in\N}$ with values in $\{1,2\}$ or $\{2\ell,2\ell+1\}$ (depending on if $t$ corresponds to a couple $(i,j)$ with $i\ne j$ or $i=j$), such that $\xi_1,\ldots,\xi_{t+1}$ are $\Q$-algebraically independent, which concludes the induction. 

Let us assume that there exists $B\in\mathfrak R_{2\ell}(\ell)$ such that $A\cap B\ne\{0\}$. Let $M_B$ be a matrix whose columns form a rational basis of $B$. Notice that $\det\begin{pmatrix} M_A & M_B\end{pmatrix}=0$ where $M_A$ is the matrix defined in Equation \eqref{def_MA_matricedeA_armoeigaoinv}. Since $M_B\in\M_{2\ell,\ell}(\Q)$, one can compute this determinant using a Laplace expansion on its $\ell$ first columns to obtain a polynomial $P\in\Q[X_1,\ldots,X_{\ell^2}]$ such that $\det\begin{pmatrix} M_A & M_B\end{pmatrix}=P(\xi_1,\ldots,\xi_{\ell^2})=0$. The fact that $\xi_1,\ldots,\xi_{\ell^2}$ are $\Q$-algebraically independent yields $P=0$. Let us decompose the matrix $M_B$ under the form $M_B=\begin{pmatrix} B_1\\ B_2\end{pmatrix}$ with $B_1,B_2\in\M_{\ell,\ell}(\R)$. The equality $P=0$ implies that
\[\forall Q\in\M_\ell(\R),\quad \Delta_Q=\det\begin{pmatrix} I_\ell & B_1 \\ Q & B_2\end{pmatrix}=0.\]
Let us mention this known claim to compute determinants of $2\times 2$ block matrices (see \cite{silvester00}, Theorem 3).
\begin{claim}\label{lemme_det_par_blocs_moaiboevodizvb}
Let $A_1,A_2,A_3,A_4\in\M_\ell(\R)$ such that $A_1A_2=A_2A_1$. Then 
\[\det\begin{pmatrix} A_1 & A_2\\A_3&A_4\end{pmatrix}=\det(A_4A_1-A_3A_2).\]
\end{claim}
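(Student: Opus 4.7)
The plan is to first establish the identity under the additional assumption that $A_1$ is invertible, and then remove this assumption by a standard perturbation argument. The invertible case will proceed via a block LU factorisation, after which the commutation hypothesis will allow merging the two resulting determinants into one.

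For the invertible case, I would write
\[\begin{pmatrix} A_1 & A_2\\A_3&A_4\end{pmatrix} = \begin{pmatrix} A_1 & 0\\A_3&I_\ell\end{pmatrix}\begin{pmatrix} I_\ell & A_1^{-1}A_2\\0&A_4-A_3A_1^{-1}A_2\end{pmatrix},\]
which is verified by direct block multiplication. Taking determinants gives $\det(A_1)\det(A_4-A_3A_1^{-1}A_2)$. The key step is then to combine these into a single determinant via
\[\det(A_1)\det(A_4-A_3A_1^{-1}A_2)=\det\bigl((A_4-A_3A_1^{-1}A_2)A_1\bigr)=\det(A_4A_1-A_3A_1^{-1}A_2A_1).\]
The hypothesis $A_1A_2=A_2A_1$ then gives $A_1^{-1}A_2A_1=A_2$, producing the desired $\det(A_4A_1-A_3A_2)$.

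To handle arbitrary $A_1$, I would replace it by $A_1+tI_\ell$ for a real parameter $t$. This preserves the commutation relation (since $I_\ell$ is central), and $A_1+tI_\ell$ is invertible for all but finitely many $t\in\R$, namely outside the spectrum of $-A_1$. Both sides of the claimed formula are polynomial in $t$; they coincide on a cofinite subset of $\R$ by the invertible case, hence coincide as polynomials in $\R[t]$, and in particular at $t=0$. This yields the identity in full generality.

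The only delicate point is the non-commutativity of matrix multiplication: when merging $\det(A_1)$ with $\det(A_4-A_3A_1^{-1}A_2)$ one must multiply on the correct side, and it is exactly there that the hypothesis $A_1A_2=A_2A_1$ is used to cancel the unwanted conjugation by $A_1$. Everything else is routine.
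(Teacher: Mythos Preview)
Your proof is correct. Note that the paper does not actually supply its own proof of this claim: it simply cites \cite{silvester00}, Theorem~3, as a known result. Your argument---block LU factorisation in the invertible case, followed by a continuity/perturbation argument replacing $A_1$ by $A_1+tI_\ell$ to handle the general case---is precisely the standard proof of this identity (and is essentially the argument given in Silvester's paper), so there is nothing to compare.
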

Since $I_\ell$ commutes with $B_1$, Claim \ref{lemme_det_par_blocs_moaiboevodizvb} can be used to get
\begin{equation}\label{Delta_Q_apibpinfvpinpvinedc}
\forall Q\in\M_\ell(\R),\quad \Delta_Q=\det(B_2-QB_1)=0.
\end{equation}
Let $\lambda\in\R$; with $Q=\lambda I_\ell$, one has $\det(B_2-\lambda B_1)=0$. Assume that $B_1$ is invertible, then
\[0=\Delta_Q=\det((B_2B_1^{-1}-\lambda I_\ell)B_1)=\det(B_2B_1^{-1}-\lambda I_\ell)\det(B_1).\]
Thus, the fact that $\det(B_1)\ne0$ yields that for all $\lambda\in\R$, $\det(B_2B_1^{-1}-\lambda I_\ell)=0$. Therefore, for all $\lambda\in\R$, $\lambda$ is an eigenvalue of $B_2B_1^{-1}$, and this can not be, so $\det(B_1)=0$. Let $r=\rk(B_1)<\ell$, let $U,V\in\GL_\ell(\R)$ be two invertible matrices such that
\[UB_1V=\begin{pmatrix} I_r & 0 \\ 0 & 0\end{pmatrix}=\begin{pmatrix} J_r& 0\end{pmatrix}\in\M_\ell(\R)\quad\text{ where }\quad J_r=\begin{pmatrix} I_r\\0\end{pmatrix}\in\M_{\ell,r}(\R).\]
Let us decompose $UB_2V$ as $UB_2V=\begin{pmatrix} C_1& C_2\end{pmatrix}\in\M_\ell(\R)$ where the matrix $C_1\in\M_{\ell,r}(\R)$ is formed with the $r$ first columns of $UB_2V$, and the matrix $C_2\in\M_{\ell,\ell-r}(\R)$ is formed with the $\ell-r$ last columns of $UB_2V$. Thus, the matrices $\begin{pmatrix} J_r & 0 \\ C_1 & C_2 \end{pmatrix}$ and $\begin{pmatrix} B_1\\B_2\end{pmatrix}$ are equivalent since
\begin{equation}\label{equivdeuxmatrices_moareinmoaunvomfn}
\begin{pmatrix} J_r & 0 \\ C_1 & C_2 \end{pmatrix}=\begin{pmatrix} U & 0 \\ 0 & U\end{pmatrix}
\begin{pmatrix} B_1\\B_2\end{pmatrix}V\in\M_{2\ell,\ell}(\R).
\end{equation}
Since $I_\ell$ commutes with $UB_1V$, Claim \ref{lemme_det_par_blocs_moaiboevodizvb} implies that for all $Q\in\M_\ell(\R)$:
\[\det\begin{pmatrix} I_\ell & UB_1V \\ UQU^{-1} & UB_2V\end{pmatrix}=\det(UB_2V-UQU^{-1}UB_1V)=\det(U)\Delta_Q\det (V)=0\]
using Equation \eqref{Delta_Q_apibpinfvpinpvinedc}. Since this is true for all $Q\in\M_\ell(\R)$, let $Q'=UQU^{-1}$ to get
\[\forall Q'\in\M_\ell(\R),\quad \Delta'_{Q'}=\det\begin{pmatrix} I_\ell & UB_1V \\ Q' & UB_2V\end{pmatrix}=0.\]
Let $R\in\M_{\ell,r}(\R)$, and let us define a block matrix as $Q'=\begin{pmatrix} C_1-R & 0\end{pmatrix}\in\M_\ell(\R)$. Since $I_\ell$ et $UB_1V$ commute, Claim \ref{lemme_det_par_blocs_moaiboevodizvb} implies that
\[0=\Delta'_{Q'}=\det(UB_2V-Q'UB_1V)=\det\left(\begin{pmatrix} C_1 & C_2\end{pmatrix}-\begin{pmatrix} C_1-R & 0\end{pmatrix}\begin{pmatrix} I_r & 0\\0&0\end{pmatrix}\right)=\det\begin{pmatrix} R & C_2\end{pmatrix}.\]
If $\rk(C_2)=\ell-r$, then it would be possible to find $R$ such that $\rk\begin{pmatrix} R & C_2\end{pmatrix}=\ell$, which can not be since its determinant $\Delta'_{Q'}$ would be non-zero. Therefore, $\rk(C_2)<\ell-r$. Equation \eqref{equivdeuxmatrices_moareinmoaunvomfn} yields
\[\rk(M_B)=\rk\begin{pmatrix} B_1 \\ B_2\end{pmatrix}=\rk\begin{pmatrix} J_r & 0 \\ C_1 & C_2 \end{pmatrix}=r+\rk(C_2)<r+\ell-r=\ell,\]
which can not be since $\dim B=\ell=\rk(M_B)$; hence $A\cap B=\{0\}$ for all $B\in\mathfrak R_{2\ell}(\ell)$, \emph{i.e.} $A\in\mathfrak I_{2\ell}(\ell,\ell)_1\subset\mathfrak I_{n}(\ell,\ell)_\ell$. \\

The subspace $A$ having been constructed, let us construct rational subspaces $B_N$ for $N\ge 1$ approaching $A$ to its $\ell$-th angle to the exponent exactly $\beta$. Then, we will show that these subspaces $B_N$ are the ones providing the best approximation of $A$ to its $\ell$-th angle, which will finally give $\muexpA nA\ell\ell=\beta$. 

For $(i,j)\in\{1,\ldots,\ell\}^2$ and $N\ge 1$, let
\[f_N^{(i,j)}=\theta^{\lfloor\alpha^N\rfloor}\sum_{k=0}^N \frac{e^{(i,j)}_k}{\theta^{\lfloor\alpha^k\rfloor}}\in\Z\]
and $M_{B_N}$ be the block matrix
\[M_{B_N}=\begin{pmatrix} \theta^{\lfloor\alpha^N\rfloor}I_\ell \\ F_N\end{pmatrix}\in\M_{2\ell,\ell}(\Z)\]
where $F_N$ is the matrix $(f_N^{(i,j)})_{(i,j)\in\{1,\ldots,\ell\}^2}\in\M_\ell(\Z)$. Let us denote by $X_N^{(1)},\ldots,X_N^{(\ell)}$ the columns of $M_{B_N}$, and let
\[B_N=\span(X_N^{(1)},\ldots,X_N^{(\ell)})\in\mathfrak R_{2\ell}(\ell).\]
One can notice that for all $(i,j)\in\{1,\ldots,\ell\}^2$:
\[\sum_{k=N+1}^\infty \frac{e_k^{(i,j)}}{\theta^{\lfloor\alpha^k\rfloor}}\le (2\ell+1)\sum_{j=\lfloor\alpha^{N+1}\rfloor}^\infty \frac 1{\theta^j}< \frac{4\ell+2}{\theta^{\lfloor\alpha^{N+1}\rfloor}}\]
because $\theta> 2$, so
\begin{equation}\label{le_sev_approche_bien_moaeirgnomdfvn}
0<\xi_{i,j}-\frac{f_N^{(i,j)}}{\theta^{\lfloor\alpha^N\rfloor}}=\sum_{k=N+1}^\infty \frac{e_k^{(i,j)}}{\theta^{\lfloor\alpha^k\rfloor}}<\frac {4\ell+2}{\theta^{\lfloor\alpha^{N+1}\rfloor}}.
\end{equation}
Let us show that there exists a constant $c_1>0$ depending only on $Y_1,\ldots,Y_\ell$ such that
\[\forall N\ge 1,\quad \psi_\ell(A,B_N)\le \frac{c_1}{H(B_N)^{\alpha/\ell}},\]
which will imply that $\muexpA {2\ell}A{\ell}{\ell}\ge \alpha/\ell$. In order to do this, let us establish an upper bound for the height of the $B_N$. We will see later (Claim \ref{lemmeminorationhauteurdesBN_aeoinmoaeivnv}) that this upper bound is in fact optimal up to a multiplicative constant.
\begin{claim}\label{majoration_hauteur_B_N_painoaeoefainveo}
For all $N\ge 1$, one has
\[H(B_N)\le c_2(\theta^{\lfloor\alpha^N\rfloor})^\ell\]
where $c_2>0$ depends only on $\ell$.
\end{claim}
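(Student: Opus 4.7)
My plan is to bound $H(B_N)$ from above by the generalised determinant of the integer basis $(X_N^{(1)},\ldots,X_N^{(\ell)})$ of $B_N$, then to apply Hadamard's inequality and estimate the Euclidean norm of each column of $M_{B_N}$ explicitly.

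More precisely, since the columns of $M_{B_N}$ lie in $\Z^{2\ell}$, the $\ell\times\ell$ minors of $M_{B_N}$ form an integer vector $\Xi_N$ of Pl\"ucker coordinates for $B_N$ (possibly non-primitive), and the Cauchy--Binet formula gives $\norme{\Xi_N} = \sqrt{\det(\transp{M_{B_N}}M_{B_N})} = D(X_N^{(1)},\ldots,X_N^{(\ell)})$. The definition of the height together with Hadamard's inequality then yields
\[H(B_N) \;=\; \frac{\norme{\Xi_N}}{\gcd(\Xi_N)} \;\le\; \norme{\Xi_N} \;=\; D\bigl(X_N^{(1)},\ldots,X_N^{(\ell)}\bigr) \;\le\; \prod_{i=1}^{\ell}\norme{X_N^{(i)}}.\]

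It will then remain to bound each $\norme{X_N^{(i)}}$ by $\theta^{\lfloor\alpha^N\rfloor}$ up to a multiplicative constant depending only on $\ell$. The top half of $X_N^{(i)}$ contributes exactly $\theta^{\lfloor\alpha^N\rfloor}$ to its norm. The bottom half consists of the integers $f_N^{(j,i)}$ for $j\in\{1,\ldots,\ell\}$; since $e_k^{(j,i)}\le 2\ell+1$ for every $k$, one has
\[\bigl|f_N^{(j,i)}\bigr| \;\le\; \theta^{\lfloor\alpha^N\rfloor}\sum_{k=0}^{\infty}\frac{2\ell+1}{\theta^{\lfloor\alpha^k\rfloor}},\]
and this series converges (because $\alpha=\ell\beta\ge 2$ and $\theta\ge 2$) to a finite constant $C$; by the remark immediately preceding the claim, $\theta$ is itself a function of $\ell$, so $C$ depends only on $\ell$. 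Hence $\norme{X_N^{(i)}}\le (1+\ell C^2)^{1/2}\,\theta^{\lfloor\alpha^N\rfloor}$, and taking the product over $i$ yields the bound with $c_2=(1+\ell C^2)^{\ell/2}$.

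I do not anticipate any serious obstacle here: the statement is essentially a Hadamard estimate for an explicit matrix. The only mildly delicate point is tracking the dependence of constants, which collapses to a dependence on $\ell$ alone because $\theta$ is itself determined by $\ell$. I expect the companion lower bound for $H(B_N)$ (stated later as Claim \ref{lemmeminorationhauteurdesBN_aeoinmoaeivnv}) to be the subtler half of the pair, since it is there that Hypothesis \eqref{hyp_sur_theta_amorinvamnoavcbo} on $\theta$ must be used essentially.
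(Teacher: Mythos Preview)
Your proposal is correct and follows essentially the same route as the paper: bound $H(B_N)$ by $\norme{X_N^{(1)}\wedge\cdots\wedge X_N^{(\ell)}}$, apply Hadamard's inequality, and estimate each $\norme{X_N^{(i)}}$ via the crude bound $|f_N^{(j,i)}|\le (2\ell+1)\,\theta^{\lfloor\alpha^N\rfloor}\sum_{k\ge 0}\theta^{-\lfloor\alpha^k\rfloor}$. The paper makes the constant slightly more explicit by noting $\sum_{k\ge 0}\theta^{-\lfloor\alpha^k\rfloor}\le 2$ (since $\theta\ge 2$), arriving at $c_2=(2(2\ell+1)\sqrt{2\ell})^\ell$, but the argument is the same.
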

\begin{proof}
Because $\theta\ge 2$, one has
\begin{equation}\label{maj_f_N_ij_aouomfaeuboueabv}
\abs{f_N^{(i,j)}}\le (2\ell+1)\theta^{\lfloor\alpha^N\rfloor}\sum_{k=0}^N\frac 1{\theta^{\lfloor\alpha^k\rfloor}}\le 2(2\ell+1)\theta^{\lfloor\alpha^N\rfloor}.
\end{equation}
Therefore, because all the $2\ell$ coefficients of each $X_N^{(j)}$ are smaller than $2(2\ell+1)\cdot \theta^{\lfloor\alpha^N\rfloor}$:
\[H(B_N)\le \norme{X_N^{(1)}\wedge\cdots\wedge X_N^{(\ell)}}\le \prod_{j=1}^\ell\norme{X_N^{(j)}}\le (2(2\ell+1)\cdot \sqrt{2\ell})^\ell (\theta^{\lfloor\alpha^N\rfloor})^\ell.\]
\end{proof}
Let us state a special case of Lemma 6.1 of \cite{joseph21bis} which will be used below.
\begin{lemma}\label{resultat_reconstruction_proximite_oaeirgbvodbv}
Let $F_1,\ldots,F_\ell, B_1,\ldots,B_\ell$ be $2\ell$ lines of $\R^{2\ell}$. Assume that the $F_i$ span a subspace of dimension $\ell$ and so do the $B_i$. Let $F=F_1\oplus\cdots\oplus F_\ell$ and $B=B_1\oplus\cdots\oplus B_\ell$, then one has
\[\psi_\ell(F,B)\le c_{F}\sum_{i=1}^\ell\psi_{1}(F_i,B_i)\]
where $c_{F}>0$ is a constant depending only on $F_1,\ldots,F_\ell$.
\end{lemma}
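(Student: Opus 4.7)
The plan is to invoke Lemma \ref{lemma_12_Schmidt_vaeorinbofiso} with $j=\ell$. Since $\dim F = \ell$, the only $\ell$-dimensional subspace of $F$ is $F$ itself, so $\psi_\ell(F,B)$ equals the infimum of those $\lambda$ such that every $X\in F\setminus\{0\}$ admits some $Y\in B\setminus\{0\}$ with $\psi(X,Y)\le \lambda$. It therefore suffices to produce, for each $X\in F\setminus\{0\}$, a vector $Y\in B\setminus\{0\}$ satisfying $\psi(X,Y)\le c_F\sum_{i=1}^\ell\psi_1(F_i,B_i)$ with $c_F$ depending only on $F_1,\ldots,F_\ell$.

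For each $i$ I fix unit vectors $u_i\in F_i$ and $v_i\in B_i$ with $u_i\cdot v_i\ge 0$ realising $\psi(u_i,v_i)=\psi_1(F_i,B_i)$. Since the corresponding angle $\theta_i$ lies in $[0,\pi/2]$, elementary trigonometry gives $\norme{u_i-v_i}=2\sin(\theta_i/2)\le\sqrt 2\,\sin\theta_i=\sqrt 2\,\psi_1(F_i,B_i)$. Writing $X=\sum_{i=1}^\ell \lambda_i u_i$ along the direct sum $F=F_1\oplus\cdots\oplus F_\ell$, I define $Y=\sum_{i=1}^\ell \lambda_i v_i\in B$; the triangle inequality yields
\[\norme{X-Y}\le \sqrt 2\,\Big(\max_i\abs{\lambda_i}\Big)\sum_{i=1}^\ell\psi_1(F_i,B_i).\]
Equivalence of norms on the finite-dimensional space $F$ supplies a constant $C_F>0$ depending only on $F_1,\ldots,F_\ell$ with $\max_i\abs{\lambda_i}\le C_F\norme X$.

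Set $S=\sum_{i=1}^\ell\psi_1(F_i,B_i)$. Either $S\ge(2\sqrt 2\,C_F)^{-1}$, in which case $\psi_\ell(F,B)\le 1\le 2\sqrt 2\,C_F\,S$ is automatic, or $\norme{X-Y}\le\norme X/2$, and then $Y\ne 0$, $\norme Y\ge\norme X/2$, so
\[\psi(X,Y)=\frac{\norme{X\wedge(Y-X)}}{\norme X\,\norme Y}\le\frac{\norme{X-Y}}{\norme Y}\le 2\sqrt 2\,C_F\,S.\]
Lemma \ref{lemma_12_Schmidt_vaeorinbofiso} then delivers $\psi_\ell(F,B)\le c_F\,S$ with $c_F=2\sqrt 2\,C_F$.

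The main obstacle is controlling $C_F$: it is the operator norm of the decomposition map $X\mapsto(\lambda_i)_i$ on $F$ and blows up whenever two of the lines $F_i$ become close to parallel. This geometric sensitivity is precisely why the constant $c_F$ in the statement depends on $F_1,\ldots,F_\ell$ and not merely on $\ell$.
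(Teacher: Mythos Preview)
Your proof is correct. The paper does not actually prove this lemma: it merely quotes it as a special case of Lemma~6.1 of \cite{joseph21bis}, so there is no in-paper argument to compare against. Your approach---invoke Lemma~\ref{lemma_12_Schmidt_vaeorinbofiso} with $j=\ell$ so that the only admissible $j$-dimensional subspace of $F$ is $F$ itself, transport the coordinates of $X$ from the basis $(u_i)$ of $F$ to the vectors $(v_i)$ in $B$, and bound $\psi(X,Y)$ through $\norme{X-Y}$---is a clean, self-contained argument. The dichotomy on the size of $S$ neatly disposes of the degenerate possibility $Y=0$, and your closing remark makes explicit why $c_F$ must depend on the configuration of the $F_i$ (through the conditioning of the basis $(u_i)$) rather than on $\ell$ alone.
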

For $i\in\{1,\ldots,\ell\}$, let $Z_N^{(i)}=\theta^{-\lfloor\alpha^N\rfloor}X_N^{(i)}$. Notice that the definition of $\psi(X,Y)$ leads to the following elementary claim.
\begin{claim}\label{claim_trigo_majoration_oaeibnoenvn}
If $X$ and $Y$ are non-zero vectors, then $\psi(X,Y)\le \norme{X-Y}/\norme{X}$.
\end{claim}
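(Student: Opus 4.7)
The plan is to reduce the claim to the identity $X\wedge Y=(X-Y)\wedge Y$, which follows immediately from bilinearity of $\wedge$ together with $Y\wedge Y=0$. Once this identity is in hand, the inequality is a one-line computation using the standard bound $\norme{u\wedge v}\le\norme u\,\norme v$ valid for any $u,v\in\R^n$.

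Concretely, I would first write $X\wedge Y=(X-Y)\wedge Y+Y\wedge Y=(X-Y)\wedge Y$, so that the difference $X-Y$ appears naturally on the right. Then I would apply the bound $\norme{u\wedge v}\le\norme u\,\norme v$ (which comes from the identity $\norme{u\wedge v}^2=\norme u^2\norme v^2-(u\cdot v)^2$ together with the non-negativity of $(u\cdot v)^2$, i.e.\ Cauchy--Schwarz) to obtain
\[\norme{X\wedge Y}=\norme{(X-Y)\wedge Y}\le\norme{X-Y}\cdot\norme Y.\]
Dividing both sides by $\norme X\cdot\norme Y$ and recalling the definition $\psi(X,Y)=\norme{X\wedge Y}/(\norme X\cdot\norme Y)$ yields the desired inequality $\psi(X,Y)\le\norme{X-Y}/\norme X$.

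There is no real obstacle in this proof: geometrically, $\psi(X,Y)\cdot\norme X$ is simply the distance from $X$ to the line $\span(Y)$, and the claim expresses the obvious fact that this distance is at most $\norme{X-Y}$, since $Y$ itself lies on that line. The algebraic manipulation above is just a convenient way to record that observation without having to introduce orthogonal projections.
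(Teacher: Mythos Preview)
Your proof is correct and is exactly the kind of argument the paper has in mind: the paper does not give a separate proof of this claim, noting only that it follows directly from the definition $\psi(X,Y)=\norme{X\wedge Y}/(\norme X\,\norme Y)$, and your computation via $X\wedge Y=(X-Y)\wedge Y$ is the natural way to make that remark precise.
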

Here, $\norme{Y_i}\ge 1$, so Claim \ref{claim_trigo_majoration_oaeibnoenvn} combined with Inequality \eqref{le_sev_approche_bien_moaeirgnomdfvn} implies that
\begin{equation}\label{majoration_psi_XNYi_zmofihgmenv} 
\psi(X_N^{(i)},Y_i)=\psi(Z_N^{(i)},Y_i) \le \frac{\norme{Z_N^{(i)}-Y_i}}{\norme{Y_i}}\le c_3(\theta^{\lfloor\alpha^N\rfloor})^{-\alpha}
\end{equation}
with $c_3>0$ depending only on $\ell$. Lemma \ref{resultat_reconstruction_proximite_oaeirgbvodbv} gives a constant $c_4>0$ depending only on $Y_1,\ldots,Y_\ell$ such that 
\begin{equation}\label{maj_psi_ell_A_B_N_aoeribqfoisbcovbz}
\psi_\ell(A,B_N)\le c_4\sum_{i=1}^\ell \psi(X_N^{(i)},Y_i)\le \frac{c_5}{(\theta^{\lfloor\alpha^N\rfloor})^{\alpha}}
\end{equation}
with $c_5>0$ depending only on $Y_1,\ldots,Y_\ell$. Using Claim \ref{majoration_hauteur_B_N_painoaeoefainveo} which assets that $H(B_N)\le c_2(\theta^{\lfloor\alpha^N\rfloor})^\ell$, this yields $\psi_\ell(A,B_N) \le c_1 H(B_N)^{-\alpha/\ell}$. \\

Now, we will show that the $B_N$ achieve the best approximation of $A$ to its $\ell$-th angle. Concretely, let us prove that if $\epsilon>0$ and $C\in\mathfrak R_{2\ell}(\ell)$ are such that 
\begin{equation}\label{hyp_sur_psi2_A_C}
\psi_\ell(A,C)\le \frac 1{H(C)^{\alpha/\ell+\epsilon}}
\end{equation}
and if $H(C)$ is large enough (in terms of $\ell$ and $\epsilon$), then there exists $N\ge 1$ such that $C=B_N$.

Since $C$ is a rational subspace, there exist $v_1,\ldots,v_\ell\in\Z^{2\ell}$ such that $(v_1,\ldots,v_\ell)$ is a $\Z$-basis of $C\cap\Z^{2\ell}$. One has $H(C)=\norme{v_1\wedge\cdots\wedge v_\ell}$ with Theorem \ref{th_def_equiv_hauteur_vaoribibgipn}. To prove that $C=B_N$ for some integer $N\ge 1$, let us show that all the $X_N^{(i)}$ for $i\in\{1,\ldots,\ell\}$ are in $C=\span(v_1,\ldots,v_\ell)$. Since $\dim C=\dim B_N$, it will imply that $C=B_N$. Let $N\ge 1$ and $i\in\{1,\ldots,\ell\}$; let us consider the $\ell+1$ vectors $X_N^{(i)},v_1,\ldots,v_\ell$, and let $Q=\begin{pmatrix} X_N^{(i)}&v_1&\cdots&v_\ell\end{pmatrix}\in\M_{2\ell,\ell+1}(\Z)$. Since $v_1,\ldots,v_\ell$ are linearly independent, to show that $X_N^{(i)}\in\span(v_1,\ldots,v_\ell)$, it is sufficient to show that $\rk(Q)<\ell+1$, \emph{i.e.} that all $(\ell+1)\times(\ell+1)$ minors of $Q$ are zero. For this purpose, let us establish that $D=\norme{X_N^{(i)}\wedge v_1\wedge\cdots\wedge v_\ell}=0$.

Let $p_C^\perp$ be the orthogonal projection onto $C$ and $h$ the vector $h=p_C^\perp(X_N^{(i)})-X_N^{(i)}$. There exist $\lambda_1,\ldots,\lambda_\ell\in\R$ such that $X_N^{(i)}$ can be written as $X_N^{(i)}=\sum_{j=1}^\ell\lambda_jv_j-h$. Because $h\in C^\perp$, one has
\[D=\norme{\left(\sum_{j=1}^\ell\lambda_jv_j-h\right)\wedge v_1\wedge\cdots\wedge v_\ell}=\norme h \cdot \norme{v_1\wedge\cdots\wedge v_\ell}=\norme h H(C).\]
Moreover, $\norme h=\norme{X_N^{(i)}}\psi_1(X_N^{(i)},C)$, so
\[D\le c_6 \theta^{\lfloor\alpha^N\rfloor} (\psi(X_N^{(i)},Y_i)+\psi_1(\span(Y_i),C))H(C)\] 
with $c_6>0$ depending only on $\ell$, using Equation \eqref{maj_f_N_ij_aouomfaeuboueabv} and a triangle inequality on $\psi$ (for all non-zero vectors $Z_1,Z_2,Z_3$, one has $\psi(Z_1,Z_2)\le\psi(Z_1,Z_3)+\psi(Z_3,Z_2)$; see \cite{schmidt67}, Equation (3) page 446). Lemma \ref{lemme_transfert_psi1_psiell_baoeoearv} yields $\psi_1(\span(Y_i),C)\le \psi_\ell(A,C)$; using the upper bound \eqref{majoration_psi_XNYi_zmofihgmenv} to deal with $\psi(Y_i,X_N^{(i)})$ and the upper bound \eqref{hyp_sur_psi2_A_C} to deal with $\psi_\ell(A,C)$, one gets
\begin{equation}\label{majoration_de_D_aobvobobdvipsn}
D\le c_6 \theta^{\lfloor\alpha^N\rfloor}H(C)\left(\frac {c_3}{(\theta^{\lfloor\alpha^N\rfloor})^{\alpha}}+\frac 1{H(C)^{\alpha/\ell+\epsilon}}\right)\le c_7 \left(\frac {H(C)}{\theta^{\lfloor\alpha^N\rfloor(\alpha-1)}}+\frac{\theta^{\lfloor\alpha^N\rfloor}}{H(C)^{\alpha/\ell-1+\epsilon}}\right)
\end{equation}
with $c_7>0$ depending only on $\ell$.

From now on, let us choose a particular $N$: let $N$ be the largest integer such that $\theta^{\alpha^N}\le H(C)^{\alpha/\ell-1+\epsilon/2}$. Notice that $\theta^{\lfloor\alpha^N\rfloor}\le H(C)^{\alpha/\ell-1+\epsilon/2}$. Because $N$ is maximal, one has $(\theta^{\alpha^N})^\alpha=\theta^{\alpha^{N+1}}> H(C)^{\alpha/\ell-1+\epsilon/2}$, so $\theta^{\alpha^N}>H(C)^{(\alpha/\ell-1+\epsilon/2)/\alpha}$. Since $\alpha=\ell\beta\ge(2\ell+1+\sqrt{1+4\ell^2})/2$ yields $\alpha^2-(2\ell+1)\alpha+\ell\ge0$, so $(\alpha/\ell-1)/\alpha\ge 1/(\alpha-1)$. Thus, $\theta^{\alpha^N}>H(C)^{1/(\alpha-1)+\epsilon/(2\alpha)}$, whence $
\theta^{\lfloor\alpha^N\rfloor}>\theta^{-1} H(C)^{1/(\alpha-1)+\epsilon/(2\alpha)}$. Therefore, coming back to Inequality \eqref{majoration_de_D_aobvobobdvipsn} gives
\[
D\le c_8\left(\frac 1{H(C)^{(\alpha-1)\epsilon/(2\alpha)}}+\frac 1{H(C)^{\epsilon/2}}\right)\xrightarrow[H(C)\to+\infty]{} 0\]
with $c_8>0$ depending only on $\ell$. If $H(C)$ is large enough (in terms of $\ell$ and $\epsilon$, because $(\alpha-1)/(2\alpha)\ge(\ell-1)/(2\ell)$), one has $D<1$. But $E=\norme{X_N^{(i)}\wedge v_1\cdots\wedge v_\ell}_\infty$ is a positive integer such that $E\le D$, so $E=0$, \emph{i.e.} $X_N^{(i)}\wedge v_1\wedge\cdots\wedge v_\ell=0$. Thus, it has been shown that if $H(C)$ is large enough, $C=B_N$ where $N$ is the largest integer such that $\theta^{\alpha^N}\le H(C)^{\alpha/\ell-1+\epsilon/2}$. \\

Now that it has been established that if $N\in\N^*$ is large enough, the rational subspaces $B_N$ give the best possible approximation of $A$ to its $\ell$-th angle, let us show that they approach the subspace $A$ at most to the exponent $\alpha/\ell$. In other words, we shall prove that for $N$ large enough, one has
\[\psi_\ell(A,B_N)\ge \frac{c}{H(B_N)^{\alpha/\ell}}\]
with $c>0$ depending only on $Y_1,\ldots,Y_\ell$. For this, we need to establish a lower bound on the height of the $B_N$ (which will imply that the upper bound in Claim \ref{majoration_hauteur_B_N_painoaeoefainveo} is optimal up to a multiplicative constant).
\begin{claim}\label{lemmeminorationhauteurdesBN_aeoinmoaeivnv}
For all $N$ large enough, one has
\[H(B_N)\ge \tilde c(\theta^{\lfloor\alpha^N\rfloor})^\ell\]
with $\tilde c>0$ depending only on $Y_1,\ldots,Y_\ell$.
\end{claim}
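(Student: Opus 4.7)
The plan is to exploit the Plücker-coordinate definition $H(B_N)=\|\Xi\|/\gcd(\Xi)$, where $\Xi$ is the vector of $\ell\times\ell$ minors of $M_{B_N}=\begin{pmatrix}\theta^{\lfloor\alpha^N\rfloor}I_\ell\\ F_N\end{pmatrix}$. Two particular minors are easy to read off: picking the top $\ell$ rows gives $(\theta^{\lfloor\alpha^N\rfloor})^\ell$, while picking the bottom $\ell$ rows gives $\det F_N$. If these two integers are coprime, then the gcd of all $\binom{2\ell}{\ell}$ Plücker coordinates equals $1$ and hence $H(B_N)\geq\|\Xi\|\geq(\theta^{\lfloor\alpha^N\rfloor})^\ell$, which yields the claim with $\tilde c=1$ (in particular, independent of $Y_1,\ldots,Y_\ell$).

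Since $\theta$ is prime, the only prime dividing $(\theta^{\lfloor\alpha^N\rfloor})^\ell$ is $\theta$, so coprimality reduces to showing $\theta\nmid\det F_N$. Reading off $f_N^{(i,j)}=\sum_{k=0}^{N}e_k^{(i,j)}\theta^{\lfloor\alpha^N\rfloor-\lfloor\alpha^k\rfloor}$, every term with $k<N$ is divisible by $\theta$, so $f_N^{(i,j)}\equiv e_N^{(i,j)}\pmod\theta$. Thus $F_N\equiv E_N\pmod\theta$, where the integer matrix $E_N=(e_N^{(i,j)})$ has diagonal entries in $\{2\ell,2\ell+1\}$ and off-diagonal entries in $\{1,2\}$, and it suffices to show $\theta\nmid\det E_N$.

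For this last step, I will squeeze $|\det E_N|$ between $1$ and $\theta$. The matrix $E_N$ is strictly diagonally dominant (each diagonal entry has absolute value $\geq 2\ell$, while the off-diagonal row sum is $\leq 2(\ell-1)$), so $\det E_N\neq 0$, and integrality gives $|\det E_N|\geq 1$. On the other hand, the Leibniz formula yields $|\det E_N|\leq\ell!(2\ell+1)^\ell$, and the hypothesis \eqref{hyp_sur_theta_amorinvamnoavcbo} is precisely the statement $\theta>\ell!(2\ell+1)^\ell$. Hence $1\leq|\det E_N|<\theta$, so $\theta\nmid\det E_N$, which is the last missing ingredient.

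The main obstacle is precisely this coprimality step: a naive bound using only the top-block minor would yield at best $H(B_N)\geq\theta^{\lfloor\alpha^N\rfloor}$, since the gcd of the Plücker vector could \emph{a priori} be as large as $(\theta^{\lfloor\alpha^N\rfloor})^{\ell-1}$. The very specific choice of $e_k^{(i,j)}$ in $\{1,2\}$ versus $\{2\ell,2\ell+1\}$ and the size condition on $\theta$ imposed by \eqref{hyp_sur_theta_amorinvamnoavcbo} are designed exactly so that $\det F_N$ is a unit modulo $\theta$. Combined with Claim \ref{majoration_hauteur_B_N_painoaeoefainveo}, this will pin $H(B_N)$ to the order $(\theta^{\lfloor\alpha^N\rfloor})^\ell$ up to multiplicative constants, which is what is needed to eventually conclude $\muexpA{2\ell}A\ell\ell\leq\beta$.
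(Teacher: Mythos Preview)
Your proof is correct and takes a genuinely different, more direct route than the paper's.

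The paper establishes the stronger intermediate fact that $(X_N^{(1)},\ldots,X_N^{(\ell)})$ is a $\Z$-basis of $B_N\cap\Z^{2\ell}$: it shows the parallelotope spanned by these vectors contains no lattice points other than its vertices, via a somewhat lengthy $\theta$-adic valuation argument on the system \eqref{random_riozagoeigbzouefm}. Once this is known, Theorem~\ref{th_def_equiv_hauteur_vaoribibgipn} gives $H(B_N)=\norme{X_N^{(1)}\wedge\cdots\wedge X_N^{(\ell)}}$, and a limit argument $\theta^{-\lfloor\alpha^N\rfloor}X_N^{(i)}\to Y_i$ produces the constant $\tilde c$ (hence the ``for $N$ large enough'' and the dependence on $Y_1,\ldots,Y_\ell$).

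You bypass the $\Z$-basis statement entirely by working directly with the Pl\"ucker definition $H(B_N)=\norme{\Xi}/\gcd(\Xi)$ and exhibiting two coprime minors. Both arguments rest on the same arithmetic kernel---$E_N$ is strictly diagonally dominant so $\det E_N\neq 0$, and the Leibniz bound combined with \eqref{hyp_sur_theta_amorinvamnoavcbo} gives $0<|\det E_N|<\theta$, whence $\theta\nmid\det E_N$---but you leverage it far more economically. Your argument even yields the cleaner conclusion $H(B_N)\ge(\theta^{\lfloor\alpha^N\rfloor})^\ell$ for \emph{all} $N\ge 1$ with $\tilde c=1$. The paper's route does give the extra information that the columns of $M_{B_N}$ form a primitive lattice basis, but this is not used elsewhere.
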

\begin{proof}[Proof of Claim \ref{lemmeminorationhauteurdesBN_aeoinmoaeivnv}.]
Let $N\ge 1$; let us establish that the family $(X_N^{(1)},\ldots,X_N^{(\ell)})$ is a $\Z$-basis of $B_N\cap\Z^{2\ell}$. For this purpose, let us denote by $P$ the parallelotope spanned by $X_N^{(1)},\ldots,X_N^{(\ell)}$, \emph{i.e.}
\[P=\left\{\sum_{i=1}^\ell \lambda_iX_N^{(i)},\ (\lambda_1,\ldots,\lambda_\ell)\in[0,1]^\ell\right\},\]
and let us show that the $2^\ell$ vertices of $P$ are its only integer points. Let $\mathcal S$ be the set of the $2^\ell$ vertices of $P$, \emph{i.e.}
\[\mathcal S=\left\{\sum_{i=1}^\ell \delta_iX_N^{(i)},\ (\delta_1,\ldots,\delta_\ell)\in\{0,1\}^\ell\right\}.\]
Assume that there exists $X\in (P\setminus\mathcal S)\cap\Z^{2\ell}$, and let $(\lambda_1,\ldots,\lambda_\ell)\in[0,1]^\ell\setminus\{0,1\}^\ell$ be such that $X=\lambda_1 X_N^{(1)}+\cdots+\lambda_\ell X_N^{(\ell)}\in\Z^{2\ell}$. The first $\ell$ coordinates of $X$ give that for all $i\in\{1,\ldots,\ell\}$, $\lambda_i \theta^{\lfloor\alpha^N\rfloor}\in\Z$. Thus, there exist integers $\gamma_1,\ldots,\gamma_\ell\in\{0,\ldots,\theta^{\lfloor\alpha^N\rfloor}\}$ such that for all $i\in\{1,\ldots,\ell\}$, $ \lambda_i=\gamma_i\theta^{-\lfloor\alpha^N\rfloor}$, because the $\lambda_i$ are in $[0,1]$. Moreover, the last $\ell$ coordinates of $X$ give that for all $i\in\{1,\ldots,\ell\}$, $\lambda_1 f_N^{(i,1)}+\cdots+\lambda_\ell f_N^{(i,\ell)}\in\Z$, so
\begin{equation}\label{random_riozagoeigbzouefm}\forall i\in\{1,\ldots,\ell\},\quad \sum_{j=1}^\ell \frac{\gamma_j}{\theta^{\lfloor\alpha^N\rfloor}}\cdot\theta^{\lfloor\alpha^N\rfloor} \sum_{k=0}^N\frac{e_k^{(i,j)}}{\theta^{\lfloor\alpha^k\rfloor}}=\sum_{k=0}^N\frac 1{\theta^{\lfloor\alpha^k\rfloor}}\sum_{j=1}^\ell\gamma_j e_k^{(i,j)}\in\Z.
\end{equation}
For $k\in\{0,\ldots,N\}$, let us denote by $E_k$ the matrix $(e_k^{(i,j)})_{(i,j)\in\{1,\ldots,\ell\}^2}\in\M_{\ell}(\Z)$, and by $\Gamma$ the column vector $\transp(\gamma_1,\ldots,\gamma_\ell)$. Thus, the $\ell$ equations given by \eqref{random_riozagoeigbzouefm} can be rewritten using matrices as
\[\begin{pmatrix} \displaystyle\sum_{k=0}^N \frac{e_k^{(1,1)}}{\theta^{\lfloor\alpha^k\rfloor}} & \cdots & \displaystyle\sum_{k=0}^N \frac{e_k^{(1,\ell)}}{\theta^{\lfloor\alpha^k\rfloor}} \\ \vdots & & \vdots \\ \displaystyle\sum_{k=0}^N \frac{e_k^{(\ell,1)}}{\theta^{\lfloor\alpha^k\rfloor}} & \cdots & \displaystyle\sum_{k=0}^N \frac{e_k^{(\ell,\ell)}}{\theta^{\lfloor\alpha^k\rfloor}}\end{pmatrix}\begin{pmatrix} \gamma_1\\\vdots\\\vdots\\\gamma_\ell\end{pmatrix}\in\Z^\ell,\]
which becomes
\[\sum_{k=0}^N  \frac{1}{\theta^{\lfloor\alpha^k\rfloor}} E_k\Gamma\in\Z^\ell,\]
and to highlight the last term of this sum:
\[\sum_{k=0}^{N-1}  \theta^{\lfloor\alpha^N\rfloor-\lfloor\alpha^k\rfloor} E_k\Gamma+E_N\Gamma\in\theta^{\lfloor\alpha^N\rfloor}\Z^\ell.\]
Since $E_N\in\M_\ell(\Z)$, the transpose of its comatrix also belongs to $\M_\ell(\Z)$, so
\[\sum_{k=0}^{N-1}  \theta^{\lfloor\alpha^N\rfloor-\lfloor\alpha^k\rfloor} \transp\com(E_N)E_k\Gamma+\det(E_N)\Gamma\in\theta^{\lfloor\alpha^N\rfloor}\Z^\ell.\]
For $i\in\{1,\ldots\ell\}$ and $k\in\{0,\ldots,N-1\}$, let us denote by $L_{k,i}\in\M_{1,\ell}(\Z)$ the $i$-th row of the product $\transp\com(E_N)E_k$. Thus,
\begin{equation}\label{random_omerganmoernbme}
\forall i\in\{1,\ldots,\ell\},\quad \sum_{k=0}^{N-1}  (L_{k,i}\Gamma)\theta^{\lfloor\alpha^N\rfloor-\lfloor\alpha^k\rfloor}+\det(E_N)\gamma_i\in\theta^{\lfloor\alpha^N\rfloor}\Z.
\end{equation}
Let $j\in\{1,\ldots,\ell\}$; notice that $e_N^{(j,j)}\ge2\ell>\sum_{i\ne j} e_N^{(i,j)}$, so $E_N$ is a strictly diagonally dominant matrix (the case $\ell=1$ being trivial). Therefore $E_N$ is invertible, so $\det(E_N)\ne 0$. Moreover, $\abs{e_N^{(i,j)}}\le 2$ if $i\ne j$ and $\abs{e_N^{(i,i)}}\le 2\ell+1$, so by definition of $\theta$ (see Inequality \eqref{hyp_sur_theta_amorinvamnoavcbo}):
\[\abs{\det(E_N)}=\abs{\sum_{\sigma\in\mathfrak S_\ell}\epsilon(\sigma)\prod_{i=1}^\ell e_N^{(i,\sigma(i))}} \le\ell!\,(2\ell+1)^\ell <\theta.\]
Thus, since $0<\abs{\det(E_N)}<\theta$, one has $v_\theta(\det(E_N))=0$, so $v_\theta(\det(E_N)\gamma_i)=v_\theta(\gamma_i)$. Let $u\ge 0$ and $i_0\in\{1,\ldots,\ell\}$ be such that $u=\min(v_\theta(\gamma_1),\ldots,v_\theta(\gamma_\ell))=v_\theta(\gamma_{i_0})$. Coming back to Equation \eqref{random_omerganmoernbme} yields
\[\forall i\in\{1,\ldots,\ell\},\quad v_\theta\left(\sum_{k=0}^{N-1}  (L_{k,i}\Gamma)\theta^{\lfloor\alpha^N\rfloor-\lfloor\alpha^k\rfloor}+\det(E_N)\gamma_i\right)\ge\lfloor\alpha^N\rfloor,\]
with the convention $v_\theta(0)=+\infty$. For all $k\in\{0,\ldots,N-1\}$, $v_\theta(\theta^{\lfloor\alpha^N\rfloor-\lfloor\alpha^k\rfloor})\ge \lfloor\alpha^N\rfloor-\lfloor\alpha^{N-1}\rfloor>0$, and since $L_{k,i}\Gamma$ is a $\Z$-linear combination of the $\gamma_i$, $v_\theta(L_{k,i}\Gamma)\ge \min(v_\theta(\gamma_1),\ldots,v_\theta(\gamma_\ell))=u$. The particular case $i=i_0$ yields
\[v_\theta\left(\sum_{k=0}^{N-1}  (L_{k,i_0}\Gamma)\theta^{\lfloor\alpha^N\rfloor-\lfloor\alpha^k\rfloor}+\det(E_N)\gamma_{i_0}\right)=u\ge\lfloor\alpha^N\rfloor.\]
Whence, by definition of $u$, for all $i\in\{1,\ldots,\ell\}$, one has $v_\theta(\gamma_i)\ge \lfloor\alpha^N\rfloor$. Since all the $\gamma_i$ are in $\{0,\ldots,\theta^{\lfloor\alpha^N\rfloor}\}$, this implies that for all $i\in\{1,\ldots,\ell\}$, $\gamma_i\in\big\{0,\theta^{\lfloor\alpha^N\rfloor}\big\}$. Thus, $X\in\mathcal S$, which can not be.

It has been shown that the only integers points of $P$ are the ones in $\mathcal S$, which implies that the family $(X_N^{(1)},\ldots,X_N^{(\ell)})$ is a $\Z$-basis of $B_N\cap\Z^{2\ell}$. Thus, using Theorem \ref{th_def_equiv_hauteur_vaoribibgipn}, $H(B_N)=\norme{X_N^{(1)}\wedge\cdots\wedge X_N^{(\ell)}}$. But
\[\norme{\theta^{-\lfloor\alpha^N\rfloor} X_N^{(1)}\wedge\cdots\wedge\theta^{-\lfloor\alpha^N\rfloor}X_N^{(\ell)}}\xrightarrow[N\to+\infty]{}\norme{Y_1\wedge\cdots\wedge Y_\ell},\]
so for $N$ large enough:
\[H(B_N)=\left(\theta^{\lfloor\alpha^N\rfloor}\right)^\ell\norme{\theta^{-\lfloor\alpha^N\rfloor} X_N^{(1)}\wedge\cdots\wedge\theta^{-\lfloor\alpha^N\rfloor}X_N^{(\ell)}}\ge \tilde c\left(\theta^{\lfloor\alpha^N\rfloor}\right)^\ell\]
with $\tilde c>0$ depending only on $Y_1,\ldots,Y_\ell$.
\end{proof}
Let $Z_N^{(1)}=\theta^{-\lfloor\alpha^N\rfloor}X_N^{(1)}$ and $p_A^\perp$ be the orthogonal projection onto $A$. Lemma \ref{lemme_transfert_psi1_psiell_baoeoearv} gives
\begin{equation}\label{minoration_psiell_AB_N_aroighmeobvo}
\psi_\ell(A,B_N)\ge \psi_1(\span(Z_N^{(1)}),A)= \psi(Z_N^{(1)},p_A^\perp(Z_N^{(1)})).
\end{equation}
Let $\Delta=p_A^\perp(Z_N^{(1)})-Y_1$ and $\omega=\norme{p_A^\perp(Z_N^{(1)})-Z_N^{(1)}}$. Let us decompose the vector $p_A^\perp(Z_N^{(1)})$ in the basis $(Y_1,\ldots,Y_\ell)$:
\[p_A^\perp(Z_N^{(1)})=\sum_{i=1}^\ell \lambda_i Y_i=\transp \begin{pmatrix}\lambda_1&\cdots&\lambda_\ell&\star&\cdots&\star\end{pmatrix}\]
where the $\star$ are unspecified coefficients, because for all $i\in\{1,\ldots,\ell\}$, the vector $Y_i$ can be written $Y_i=\transp \begin{pmatrix} \delta_{i,1}&\cdots&\delta_{i,\ell}&\star&\cdots&\star\end{pmatrix}$ where $\delta$ is the Kronecker delta. Moreover, for all $i\in\{1,\ldots,\ell\}$, $Z_N^{(i)}=\transp \begin{pmatrix} \delta_{i,1}&\cdots&\delta_{i,\ell}&\star&\cdots&\star\end{pmatrix}$, so $\omega^2=\norme{p_A^\perp(Z_N^{(1)})-Z_N^{(1)}}^2\ge (\lambda_1-1)^2+\displaystyle\sum_{i=2}^\ell \lambda_i^2$. Thus, $\abs{\lambda_1-1}\le\omega$, and for all $i\in\{2,\ldots,\ell\}$, $\abs{\lambda_i}\le\omega$. Let $j\in\{1,\ldots,\ell\}$. One has $\norme{Y_j}^2=1+\displaystyle\sum_{i=1}^\ell\left(\sum_{k=0}^\infty \frac{e_k^{(i,j)}}{\theta^{\lfloor \alpha^k\rfloor}}\right)^2$, but $\alpha>2$, $\theta\ge 2$ and $2\ell+1\le \theta$ with Hypothesis \eqref{hyp_sur_theta_amorinvamnoavcbo} on $\theta$, so a simple computation gives $\norme{Y_j}\le \sqrt{1+4\ell}=c_9$. Notice that $\Delta=p_A^\perp(Z_N^{(1)})-Y_1=(\lambda_1-1)Y_1+\sum_{i=2}^\ell\lambda_i Y_i$, so $\norme{\Delta}\le c_{10}\omega$, with $c_{10}>0$ depending only on $\ell$. One has
\begin{equation}\label{minoration_norme_Z_N1_proj_maoeboefb}
\norme{Z_N^{(1)}\wedge p_A^\perp(Z_N^{(1)})}=\norme{Z_N^{(1)}\wedge(Y_1+p_A^\perp(Z_N^{(1)})-Y_1)}\ge \norme{Z_N^{(1)}\wedge Y_1}-\norme{Z_N^{(1)}\wedge \Delta}.
\end{equation}
Notice that $\begin{pmatrix} Z_N^{(1)} &Y_1\end{pmatrix}\in\M_{2\ell,2}(\R)$, and let us denote by $\eta_{i,j}$ its the $2\times 2$ minor corresponding to its $i$-th and $j$-th rows with $i<j$; one has
\[\norme{Z_N^{(1)}\wedge Y_1}=\sqrt{\sum_{1\le i<j\le 2\ell} \eta_{i,j}^2}\ge \abs{\eta_{1,\ell+1}}=\eta_{1,\ell+1}=\begin{vmatrix} 1 & 1 \\\displaystyle\sum_{k=0}^N\frac{e_k^{(1,1)}}{\theta^{\lfloor\alpha^k\rfloor}} &\displaystyle\sum_{k=0}^\infty\frac{e_k^{(1,1)}}{\theta^{\lfloor\alpha^k\rfloor}}\end{vmatrix}\ge \frac 1{\theta^{\lfloor\alpha^{N+1}\rfloor}}.\]
Moreover, in the same fashion that it was shown above that $\norme{Y_j}\le c_9$, one can establish that $\norme{Z_N^{(1)}}\le c_9$. Since $\norme{\Delta}\le c_{10}\omega$, it yields $\norme{Z_N^{(1)}\wedge \Delta}\le \norme{Z_N^{(1)}}\cdot\norme{\Delta}\le c_{11}\omega$ with $c_{11}>0$ depending only on $\ell$. With Inequality \eqref{minoration_norme_Z_N1_proj_maoeboefb}, $\norme{Z_N^{(1)}\wedge p_A^\perp(Z_N^{(1)})}\ge \theta^{-\lfloor\alpha^{N+1}\rfloor}-c_{11}\omega$. Because $\norme{p_A^\perp(Z_N^{(1)})}\le \norme{Z_N^{(1)}}\le c_9$, one has
\[\omega=\norme{p_A^\perp(Z_N^{(1)})-Z_N^{(1)}}=\norme{Z_N^{(1)}}\psi(p_A^\perp(Z_N^{(1)}),Z_N^{(1)})=\norme{Z_N^{(1)}}\frac{\norme{p_A^\perp(Z_N^{(1)})\wedge Z_N^{(1)}}}{\norme{Z_N^{(1)}}\cdot\norme{p_A^\perp(Z_N^{(1)})}}\ge \frac {c_{12}}{\theta^{\lfloor\alpha^{N+1}\rfloor}}-c_{13}\omega\]
with $c_{12},c_{13}>0$ depending only on $\ell$. Hence, $\omega\ge c_{12}(1+c_{13})^{-1}\theta^{-\lfloor\alpha^{N+1}\rfloor}=c_{14}\theta^{-\lfloor\alpha^{N+1}\rfloor}$ with $c_{14}>0$ depending only on $\ell$. Let us use Inequality \eqref{minoration_psiell_AB_N_aroighmeobvo} to get:
\begin{equation}\label{minoration_psiell_oaioeanvoienvoeinvoevni}
\psi_\ell(A,B_N)\ge \psi(Z_N^{(1)},p_A^\perp(Z_N^{(1)}))=\frac{\omega}{\norme{Z_N^{(1)}}}\ge \frac {c_{15}}{\theta^{\lfloor\alpha^{N+1}\rfloor}}
\end{equation}
with $c_{15}>0$ depending only on $\ell$. Notice that $\lfloor\alpha^{N+1}\rfloor\le \alpha^{N+1}\le \lfloor\alpha^N\rfloor\alpha+\alpha$, so $\theta^{-\lfloor\alpha^{N+1}\rfloor}\ge \theta^{-\lfloor\alpha^{N}\rfloor\alpha-\alpha}=\theta^{-\alpha}\cdot(\theta^{-\lfloor\alpha^{N}\rfloor})^\alpha$. Moreover, Claim \ref{lemmeminorationhauteurdesBN_aeoinmoaeivnv} gives a constant $c_{16}>0$, depending only on $Y_1,\ldots,Y_\ell$, such that $H(B_N)\ge c_{16}(\theta^{\lfloor\alpha^N\rfloor})^\ell$. Thus, with Inequality \eqref{minoration_psiell_oaioeanvoienvoeinvoevni}, one has
\[\psi_\ell(A,B_N)\ge \frac {c_{15}}{\theta^{\lfloor\alpha^{N+1}\rfloor}}\ge \frac {c_{15}}{\theta^{\alpha}} \cdot\frac 1{\theta^{\alpha\lfloor\alpha^N\rfloor}}\ge \frac{c_{17}}{H(B_N)^{\alpha/\ell}}\]
with $c_{17}>0$ depending only on $Y_1,\ldots,Y_\ell$. 

Notice that we have just proved: $\muexpA {2\ell}{A}{\ell}\ell\le \beta$; therefore $A$ is such that $\muexpA {2\ell}A\ell\ell=\beta$. \\

Finally, only the cases $\beta=+\infty$ and $n>2\ell$ remain to prove. Let us start by assuming that $n=2\ell$. If $\beta=+\infty$, for $(i,j)\in\{1,\ldots,\ell\}^2$ let $\xi_{i,j}=\sum_{k=0}^\infty e_k^{(i,j)} 3^{-k^k}$ where the $(e_k^{(i,j)})_{k\in\N}$ are sequences yet to be determined, with values in $\{1,2\}$. With the same notations as before, let $M_\xi=(\xi_{i,j})_{(i,j)\in\{1,\ldots,\ell\}^2}\in\M_\ell(\R)$ and let us denote by $A_\infty$ the subspace spanned by the columns $Y_1,\ldots,Y_\ell$ of the matrix $\begin{pmatrix} I_\ell \\ M_\xi\end{pmatrix}\in\M_{2\ell,\ell}(\R)$. In the same way as it was done above, one can choose sequences $(e_k^{(i,j)})_{k\in\N}$, for $(i,j)\in\{1,\ldots,\ell\}^2$, such that $A_\infty\in\mathfrak I_n(\ell,\ell)_\ell$. For $(i,j)\in\{1,\ldots,\ell\}^2$ and $N\ge 1$, let $f_N^{(i,j)}=3^{N^N}\sum_{k=0}^N e_k^{(i,j)} 3^{-k^k}$, and let us denote by $B_N\in\mathfrak R_{2\ell}(\ell)$ the rational subspace spanned by the columns of $\begin{pmatrix} 3^{N^N}I_\ell \\ F_N\end{pmatrix}\in\M_{2\ell,\ell}(\R)$ where $F_N=(f_N^{(i,j)})_{(i,j)\in\{1,\ldots,\ell\}^2}$. Again in a similar fashion as before, one can show that there exists a constant $c>0$ depending only on $Y_1,\ldots,Y_\ell$ such that for all  $N\ge 1$, $\psi_\ell(A_\infty,B_N)\le c H(B_N)^{-N/\ell}$. Thus,
\begin{equation}\label{equation_B_N_finale_aborenosnvn}
\forall \kappa>0,\quad \forall N\ge \kappa\ell,\quad \psi_\ell(A_\infty,B_N)\le \frac{c}{H(B_N)^{\kappa}}.
\end{equation}
Notice that $\psi_\ell(A_\infty,B_N)$ tends to $0$ when $N$ tends to infinity. Therefore, there exist infinitely many pairwise distinct subspaces $B_N$ satisfying Inequality \eqref{equation_B_N_finale_aborenosnvn}, so for all $\kappa>0$, $\muexpA {n}{A_\infty}\ell\ell\ge \kappa$, therefore $\muexpA {n}{A_\infty}\ell\ell=+\infty$.

Let us finally consider the case $n>2\ell$. Let us denote by $\phi$ a rational isomorphism from $\R^{2\ell}$ to $\R^{2\ell}\times\{0\}^{n-2\ell}$. Let $A'=\phi(A)$; Theorem \ref{th_inclusion_sev_rationnel_apeivpinpiaenv} yields $A'\in\mathfrak I_n(\ell,\ell)_\ell$ and 
\[\muexpA n{A'}\ell\ell=\muexpA {2\ell}A\ell\ell=\beta\]
which allows us to extend the result to integers $n>2\ell$.
\end{proof}

\begin{remark}
Since $\ell\ge 1$ and since $\beta$ satisfies Inequality \eqref{condition_sur_beta_aomeibnoaidbnv}, one has $\alpha=\ell\beta\ge(3+\sqrt 5)/2$. In the case $\ell=1$, we fall back on a known result on the irrationality exponent of $\xi_{1,1}$: 
\[\mu\left(\sum_{k=0}^\infty \frac{e^{(1,1)}_k}{\theta^{\lfloor\alpha^k\rfloor}}\right)=\alpha\]
where $\mu(\cdot)$ stands for the irrationality exponent, $(e_k^{(1,1)})_{k\in\N}$ is a sequence with values in $\{2,3\}$, $\theta$ is a prime number strictly greater than $3$, and $\alpha$ is a real number greater than $(3+\sqrt 5)/2$. The arguments in Section 8 of \cite{levesley06} lead easily to this result, but the method developed here is different (and the case $\theta=3$ is not covered here). If $2\le\alpha<(3+\sqrt 5)/2$, one still has $\mu(\xi_{1,1})=\alpha$ thanks to Theorem 2 of \cite{bugeaud08}. 
\end{remark}

\bibliographystyle{alpha}
\bibliography{biblio_article_2}

\end{document}